\author{Yao Ming Chan}
\title{Wedge product matrices and orbits of principal congruence subgroups}
\theoremstyle{definition}
\newtheorem{definition}{Definition}[section]
\theoremstyle{definition}
\newtheorem{example}{Example}[section]
\theoremstyle{theorem}
\newtheorem{theorem}{Theorem}[section]
\newtheorem{lemma}[theorem]{Lemma}
\newtheorem{remark}[theorem]{Remark}
\newtheorem{corollary}[theorem]{Corollary}
\newtheorem{proposition}[theorem]{Proposition}
\numberwithin{equation}{section}
\providecommand{\keywords}[1]
{	
  \textbf{\textit{Keywords---}} #1
}
\begin{document}
% Omit the date
\date{}
% generates the title
\maketitle

\begin{abstract} 
Following Bump and Hoffstein in \cite{BH86}, the orbits in $\Gamma_{\infty}(3) \backslash \Gamma(3)$ are in bijection with sets of invariants satisfying certain relations. We explain how wedge product matrices give an alternative definition of the invariants of matrix orbits. This new method provides the possibility of performing similar computations with other congruence subgroups and arbitrary $n \times n$ matrices. Using Steinberg's refined version of the Bruhat decomposition, we construct an explicit choice of coset representative for each orbit in the orbit space $\Gamma_{\infty}(3) \backslash \Gamma(3)$ of $3 \times 3$ matrices over the PID of Eisenstein integers.
\end{abstract}

% Keywords go here

\keywords{Bruhat decomposition, principal congruence subgroup, wedge product matrices}

% insert the table of contents 
\tableofcontents
\section*{Introduction}
\addcontentsline{toc}{section}{\protect\numberline{}Introduction}

As explained in Bump and Hoffstein's paper \cite[Section 0]{BH86}, the Jacobi theta function is a constant multiple of the residue of the Eisenstein series 

\begin{equation} \label{equation: Eisenstein 2}
E(z, s) = y^s \sum_{\gamma \in \Gamma_{\infty}(2) \backslash \Gamma(2)} j(\gamma, z)^{-1} \lvert c z + d \rvert^{-2s}
\end{equation}
at the simple pole $s = \frac{1}{2}$, where 
\begin{equation*}
\Gamma(2) = \Big\{
\begin{pmatrix}
a & b \\
c & d 
\end{pmatrix} \in SL_2(\mathbb{Z}) \; \vert \; 
a \equiv d \equiv 1 \text{ mod } 2, \; b \equiv c \equiv 0 \text{ mod } 2
 \Big\}
\end{equation*}
and $\Gamma_{\infty}(2)$ is the subgroup of upper triangular matrices in $\Gamma(2)$. In \cite{BH86}, Bump and Hoffstein study the minimal parabolic Eisenstein series 

\begin{equation} \label{equation: Eisenstein 3 Minimal}
E_{\nu_1, \nu_2}(\tau) = \sum_{\gamma \in \Gamma_{\infty}(3) \backslash \Gamma(3)} \kappa(\gamma) I_{\nu_1, \nu_2}(\gamma \tau),
\end{equation}
where

\begin{equation} \label{equation: Gamma 3 Intro}
\Gamma(3) = \left\{ A \in SL_3(\mathbb{Z}[e^{2 \pi i/3}]) \; \Big\vert \; A \equiv 
\begin{pmatrix}
1 & & \\
& 1 & \\
& & 1
\end{pmatrix} \text{ mod } 3 \mathbb{Z}[e^{2 \pi i/3}] \right\},
\end{equation}
$\Gamma_{\infty}(3)$ is the subgroup of upper triangular unipotent matrices in $\Gamma(3)$, $\kappa$ is the Kubota symbol defined in \cite[Equation 1.5]{BH86}, $\tau$ is a matrix of the form

\begin{equation*}
\tau = 
\begin{pmatrix}
1 & x_2 & x_3 \\
 & 1 & x_1 \\
 &  & 1 
\end{pmatrix}
\begin{pmatrix}
y_1 y_2 & & \\
& y_1 & \\
& & 1
\end{pmatrix}
\end{equation*}
with $x_1, x_2, x_3 \in \mathbb{C}$ and $y_1, y_2 \in \mathbb{R}_{> 0}$ and 

\begin{equation*}
I_{\nu_1, \nu_2}(\tau) = y_1^{2 \nu_1 + \nu_2} y_2^{\nu_1 + 2 \nu_2}. 
\end{equation*}
The congruence condition in equation \eqref{equation: Gamma 3 Intro} is computed entry by entry. By taking the residue at either $\nu_1 = \frac{8}{9}$ or $\nu_2 = \frac{8}{9}$, Bump and Hoffstein obtain a maximal parabolic Eisenstein series whose Fourier coefficients are cubic $L$-functions. \newline

The computation of these Fourier coefficients depends on a specific choice of coset representatives in the orbit space $\Gamma_{\infty}(3) \backslash \Gamma(3)$ of principal congruence subgroups. There is a bijection between cosets in $\Gamma_{\infty}(3) \backslash \Gamma(3)$ and sets of invariants $A_1, B_1, C_1, A_2, B_2, C_2 \in \mathbb{Z}[e^{2 \pi i/3}]$ satisfying the conditions outlined in \eqref{equation: I1} \eqref{equation: I2}, \eqref{equation: I3} and \eqref{equation: I4} (see \cite[Lemma 3]{Pro85}). The main theorem we prove in this paper is Theorem \ref{theorem: Bruhat as Invariants + Cases}, which demonstrates how, from a set of invariants $A_1, B_1, C_1, A_2, B_2, C_2 \in \mathbb{Z}[e^{2 \pi i/3}]$ satisfying conditions \eqref{equation: I1}, \eqref{equation: I2}, \eqref{equation: I3} and \eqref{equation: I4}, one constructs an explicit matrix representative of an orbit in $\Gamma_{\infty}(3) \backslash \Gamma(3)$ whose invariants are exactly $A_1, B_1, C_1, A_2, B_2, C_2$. We remark that the construction of matrix representatives in $\Gamma_{\infty}(3) \backslash \Gamma(3)$ was briefly treated by Proskurin in \cite[Proposition 1.2.3]{Pro98}. Theorem \ref{theorem: Bruhat as Invariants + Cases} divides naturally into five cases: 

\begin{equation} \label{equation: Cases} \tag{Cases}
\begin{matrix}
\textbf{Case 1:} & \text{$A_1 \neq 0$ and $A_2 \neq 0$} \\
\textbf{Case 2:} & \text{$A_1 \neq 0$ and $A_2 = 0$} \\
\textbf{Case 3:} & \text{$A_1 = 0$, $B_1 \neq 0$ and $A_2 \neq 0$} \\
\textbf{Case 4:} & \text{$A_1 = 0$, $B_1 \neq 0$ and $A_2 = 0$} \\
\textbf{Case 5:} & \text{$A_1 = B_1 = A_2 = 0$}
\end{matrix}
\end{equation}

Theorem \ref{theorem: Bruhat as Invariants + Cases} provides one possible explicit bijection from the orbits in $\Gamma_{\infty}(3) \backslash \Gamma(3)$ to sets of invariants satisfying \eqref{equation: I1}, \eqref{equation: I2}, \eqref{equation: I3} and \eqref{equation: I4}. In \cite[Theorem 5.4]{Bum84}, Bump proved that a similar bijection holds for the orbit space $G_{\infty} \backslash G$, where $G = SL_3(\mathbb{R})$ and $G_{\infty}$ is the subgroup of upper triangular unipotent matrices in $G$. By using the Bruhat decomposition in \cite[Theorem 4]{Ste67}, canonical representatives for each orbit in $G_{\infty} \backslash G$ were also provided by Bump in \cite[Equations 5.8-5.13]{Bum84}.  \newline

The proof of Theorem \ref{theorem: Bruhat as Invariants + Cases} relies on a similar technique that Steinberg used to prove a variant of the Bruhat decomposition in \cite[Theorem 15]{Ste67}. We will refer to this technique as \emph{Steinberg reduction}. As described in \cite{Lus10}, the Bruhat decomposition allows one to reduce questions about reductive algebraic groups to questions about their Weyl groups. This is useful to further understand the structure of algebraic groups and their representations. The Bruhat decomposition is ubiquitous and has been proved in a variety of different contexts. For example, see \cite{Ren86}, \cite{HMP86} and \cite{Gal21}. \newline

In \cite{BH86}, the invariants of $A \in \Gamma(3)$ are the bottom rows of $A$ and $^{\iota}A$ where $^{\iota}: \Gamma(3) \rightarrow \Gamma(3)$ is an involution. In this paper, we use a new method of defining the invariants with the \emph{wedge product matrix} $\Lambda^2(A)$ (also known as a compound matrix c.f. \cite{Mul98}). We will explain that our approach to the invariants is another point of view to the approach adopted in \cite{BH86} which also opens up the path to handling more general cases. The heart of the matter is that if $A$ is a $n \times n$ matrix with $n > 3$ then the matrices $A$ and $^{\iota}A$ do not yield all of the invariants of $A$; instead the wedge product matrices $\Lambda^k(A)$ with $k \in \{1, 2, \dots, n-1 \}$ supply the desired invariants of $A$. \newline

\textbf{Acknowledgements.} This paper is based on the final chapter of my masters thesis, written at the University of Melbourne. I express my thanks to Arun Ram for his supervision and guidance through the process of writing my masters thesis and this paper. I also thank my family for their constant support and the anonymous reviewer for their instructive feedback on an earlier draft of this paper. 

\section{Steinberg reduction} 

Let $R$ be a principal ideal domain and $M_{m \times n}(R)$ denote the ring of $m \times n$ matrices with elements in $R$. \emph{Steinberg reduction} is an algorithm which begins with a matrix $(a_1, \dots, a_n)^T \in M_{n \times 1}(R)$ and outputs a specific matrix $A \in GL_n(R)$ such that 

\begin{equation*}
A
\begin{pmatrix}
a_1 \\
a_2 \\
\vdots \\
a_n
\end{pmatrix} = 
\begin{pmatrix}
\gcd(a_1, a_2, \dots, a_n) \\
0 \\
\vdots \\
0
\end{pmatrix}.
\end{equation*}

Steinberg reduction hinges on the set

\begin{equation} \label{equation: YR}
Y(R) = \Big\{
\begin{pmatrix}
a & b \\
c & d \\
\end{pmatrix} \in SL_2(R) \; \Big \vert \; 
\begin{matrix}
c \in (R - \{ 0 \})/R^{\times}, \\
a \in R/c R
\end{matrix}
 \Big\} \cup \{ I_2 \} \subseteq SL_2(R)
\end{equation}
where $I_2$ denotes the $2 \times 2$ identity matrix. Note that throughout this paper, we refer to the representatives of each equivalence class in $R/cR$ as elements of $R$.  The group $R^{\times}$ of units in $R$ acts on $R$ via multiplication. The set of $R^{\times}$-orbits $R/R^{\times}$ consists of representatives of the ideals of $R$. In an abuse of notation, if $a_1, \dots, a_n \in R$ then we will refer to $\gcd(a_1, \dots, a_n)$ as both a representative of an ideal in $R/R^{\times}$ and as an element of $R$, up to multiplication by a unit. 

\begin{example}
Let $\omega = e^{2 \pi i/3}$ and $\textgoth{o} = \mathbb{Z}[\omega]$ denote the ring of Eisenstein integers, which is a  Euclidean domain and hence a PID (see \cite[Section 8.1]{DF04}). Then $\textgoth{o}^{\times} = \{ \pm 1, \pm \omega, \pm \omega^2 \}$. The representatives of the $\textgoth{o}^{\times}$-orbits in $\textgoth{o}/\textgoth{o}^{\times}$ are depicted pictorially by the sector $0 \leq \arg(z) < \pi/3$.

\begin{center}
\begin{tikzpicture}[scale = 0.9]
\draw[->,ultra thick] (-5,0)--(5,0) node[right]{$Re(z)$};
\draw[->,ultra thick] (0,-3)--(0,3) node[above]{$Im(z)$};
\fill (0, 0) circle[radius = 3pt];
\fill [red] (1, 0) circle[radius = 3pt];
\node[label={$1$}] at (1, 0) {};
\fill [red] (-1, 0) circle[radius = 3pt];
\node[label={$-1$}] at (-1, 0) {};
\fill [red] (1/2, {sqrt(3)/2}) circle[radius = 3pt];
\node[label={$-\omega^2$}] at (1/2, {sqrt(3)/2}) {};
\fill [red] (-1/2, {sqrt(3)/2}) circle[radius = 3pt];
\node[label={$\omega$}] at (-1/2, {sqrt(3)/2}) {};
\fill [red] (1/2, {-sqrt(3)/2}) circle[radius = 3pt];
\node[label={270:$- \omega$}] at (1/2, {-sqrt(3)/2}) {};
\fill [red] (-1/2, {-sqrt(3)/2}) circle[radius = 3pt];
\node[label={270:$\omega^2$}] at (-1/2, {-sqrt(3)/2}) {};
\fill (2, 0) circle[radius = 3pt];
\fill (3, 0) circle[radius = 3pt];
\fill (4, 0) circle[radius = 3pt];
\fill (-2, 0) circle[radius = 3pt];
\fill (-3, 0) circle[radius = 3pt];
\fill (-4, 0) circle[radius = 3pt];
\fill (3/2, {sqrt(3)/2}) circle[radius = 3pt];
\fill (5/2, {sqrt(3)/2}) circle[radius = 3pt];
\fill (7/2, {sqrt(3)/2}) circle[radius = 3pt];
\fill (9/2, {sqrt(3)/2}) circle[radius = 3pt];
\fill (-3/2, {sqrt(3)/2}) circle[radius = 3pt];
\fill (-5/2, {sqrt(3)/2}) circle[radius = 3pt];
\fill (-7/2, {sqrt(3)/2}) circle[radius = 3pt];
\fill (-9/2, {sqrt(3)/2}) circle[radius = 3pt];
\fill (3/2, {-sqrt(3)/2}) circle[radius = 3pt];
\fill (5/2, {-sqrt(3)/2}) circle[radius = 3pt];
\fill (7/2, {-sqrt(3)/2}) circle[radius = 3pt];
\fill (9/2, {-sqrt(3)/2}) circle[radius = 3pt];
\fill (-3/2, {-sqrt(3)/2}) circle[radius = 3pt];
\fill (-5/2, {-sqrt(3)/2}) circle[radius = 3pt];
\fill (-7/2, {-sqrt(3)/2}) circle[radius = 3pt];
\fill (-9/2, {-sqrt(3)/2}) circle[radius = 3pt];
\fill (4, {sqrt(3)}) circle[radius = 3pt];
\fill (3, {sqrt(3)}) circle[radius = 3pt];
\fill (2, {sqrt(3)}) circle[radius = 3pt];
\fill (1, {sqrt(3)}) circle[radius = 3pt];
\fill (0, {sqrt(3)}) circle[radius = 3pt];
\fill (-1, {sqrt(3)}) circle[radius = 3pt];
\fill (-2, {sqrt(3)}) circle[radius = 3pt];
\fill (-3, {sqrt(3)}) circle[radius = 3pt];
\fill (-4, {sqrt(3)}) circle[radius = 3pt];
\fill (4, {-sqrt(3)}) circle[radius = 3pt];
\fill (3, {-sqrt(3)}) circle[radius = 3pt];
\fill (2, {-sqrt(3)}) circle[radius = 3pt];
\fill (1, {-sqrt(3)}) circle[radius = 3pt];
\fill (0, {-sqrt(3)}) circle[radius = 3pt];
\fill (-1, {-sqrt(3)}) circle[radius = 3pt];
\fill (-2, {-sqrt(3)}) circle[radius = 3pt];
\fill (-3, {-sqrt(3)}) circle[radius = 3pt];
\fill (-4, {-sqrt(3)}) circle[radius = 3pt];
\fill (1/2, {sqrt(27)/2}) circle[radius = 3pt];
\fill (3/2, {sqrt(27)/2}) circle[radius = 3pt];
\fill (5/2, {sqrt(27)/2}) circle[radius = 3pt];
\fill (7/2, {sqrt(27)/2}) circle[radius = 3pt];
\fill (9/2, {sqrt(27)/2}) circle[radius = 3pt];
\fill (-1/2, {sqrt(27)/2}) circle[radius = 3pt];
\fill (-3/2, {sqrt(27)/2}) circle[radius = 3pt];
\fill (-5/2, {sqrt(27)/2}) circle[radius = 3pt];
\fill (-7/2, {sqrt(27)/2}) circle[radius = 3pt];
\fill (-9/2, {sqrt(27)/2}) circle[radius = 3pt];
\fill (1/2, {-sqrt(27)/2}) circle[radius = 3pt];
\fill (3/2, {-sqrt(27)/2}) circle[radius = 3pt];
\fill (5/2, {-sqrt(27)/2}) circle[radius = 3pt];
\fill (7/2, {-sqrt(27)/2}) circle[radius = 3pt];
\fill (9/2, {-sqrt(27)/2}) circle[radius = 3pt];
\fill (-1/2, {-sqrt(27)/2}) circle[radius = 3pt];
\fill (-3/2, {-sqrt(27)/2}) circle[radius = 3pt];
\fill (-5/2, {-sqrt(27)/2}) circle[radius = 3pt];
\fill (-7/2, {-sqrt(27)/2}) circle[radius = 3pt];
\fill (-9/2, {-sqrt(27)/2}) circle[radius = 3pt];
\filldraw[fill=green!20!white,
draw=green!50!black, opacity = 0.3, very thick] (0,0) -- ({3/sqrt(3)}, 3) to[bend left] (4.7, 2.5) to[bend left] (5, 0) -- cycle;
\draw [draw = black, very thick, dashed] (0,0) -- ({3/sqrt(3)}, 3);
\end{tikzpicture}
\end{center}
Each black point corresponds to an element of $\textgoth{o}$ and the six red points are the units of $\textgoth{o}$. The shaded green sector without the dashed black line contains the elements of $\textgoth{o}/\textgoth{o}^{\times}$ which index the ideals in $\textgoth{o}$. For example, the point $- \omega^2 = 1 + \omega$ lies on the dashed black line, but $- \omega^2 \textgoth{o} = \textgoth{o}$ as ideals in $\textgoth{o}$. 
\end{example}

Returning to the general case where $R$ is an arbitrary PID, suppose that $(a, b)^T \in M_{2 \times 1}(R)$. Steinberg reduction produces a specific matrix

\begin{equation} \label{equation: Steinberg Goal}
\begin{pmatrix}
p & q \\
r & s \\
\end{pmatrix} \in Y(R) \qquad \text{such that} \qquad
\begin{pmatrix}
p & q \\
r & s \\
\end{pmatrix}
\begin{pmatrix}
a \\
b \\
\end{pmatrix} = 
\begin{pmatrix}
\gcd(a, b) \\
0 \\
\end{pmatrix}.
\end{equation}

The set $Y(R)$ in equation \eqref{equation: YR} was defined by Steinberg to produce the specific matrices required for the variant of Bruhat decomposition in \cite[Theorem 15]{Ste67}. \newline

First assume that $a \neq 0$ and $b \neq 0$. Select $r \in (R - \{ 0 \})/R^{\times}$ and $s \in R$ such that $a = \gcd(a, b) s$ and $b = - \gcd(a, b) r$. Note that since $r \in (R - \{ 0 \})/R^{\times}$, $r$ and $s$ are both unique elements of $R$ satisfying the equations

\begin{equation*}
ra + sb = 0 \qquad \text{and} \qquad \gcd(r, s) = 1. 
\end{equation*}

Since $\gcd(r, s) = 1$, choose $p, q \in R$ such that $ps - qr = 1$. Then, there exists a unique $v \in R$ such that $p_v = p- rv$ and $q_v = q - sv$ satisfy

\begin{equation*}
p_v \in R/rR \qquad \text{and} \qquad p_v s - q_v r = 1.
\end{equation*} 

Therefore, the specific matrix

\begin{equation*}
\begin{pmatrix}
p_v & q_v \\
r & s \\
\end{pmatrix} \in Y(R) \qquad \text{satisfies} \qquad
\begin{pmatrix}
p_v & q_v \\
r & s \\
\end{pmatrix}
\begin{pmatrix}
a \\
b \\
\end{pmatrix} = 
\begin{pmatrix}
\gcd(a, b) \\
0 \\
\end{pmatrix}
\end{equation*}

because $p_v a + q_v b = (p_v s - q_v r) \gcd(a, b) = \gcd(a, b)$. \newline

Secondly, if $a = 0$ and $b \neq 0$ then the matrix

\begin{equation*}
\begin{pmatrix}
0 & -1 \\
1 & 0 \\
\end{pmatrix} \in Y(R) \qquad \text{satisfies} \qquad
\begin{pmatrix}
0 & -1 \\
1 & 0 \\
\end{pmatrix}
\begin{pmatrix}
0 \\
b \\
\end{pmatrix} = 
\begin{pmatrix}
-b \\
0 \\
\end{pmatrix}.
\end{equation*}

Finally, if $b = 0$ then the matrix

\begin{equation*}
\begin{pmatrix}
1 & 0 \\
0 & 1 \\
\end{pmatrix} \in Y(R) \qquad \text{satisfies} \qquad
\begin{pmatrix}
1 & 0 \\
0 & 1 \\
\end{pmatrix}
\begin{pmatrix}
a \\
0 \\
\end{pmatrix} = 
\begin{pmatrix}
a \\
0 \\
\end{pmatrix}.
\end{equation*} 

\section{The orbit space $\Gamma_{\infty}(3) \backslash \Gamma(3)$}

\subsection{Definition and properties of $\Lambda^1$ and $\Lambda^2$ invariants} 

In this section, we define the matrix orbit space $\Gamma_{\infty}(3)\backslash \Gamma(3)$ and the favourite invariants of the constituent orbits. A useful way of understanding matrix orbits is to look for \textbf{invariants} --- elements which remain the same when $A$ is multiplied on the left by an element of the group $\Gamma_{\infty}(3)$. 

\begin{definition} \label{definition: Gamma}
Let $\omega = e^{2 \pi i/3}$ and $\mathbb{Z}[\omega] = \textgoth{o}$ be the ring of Eisenstein integers. Define

\begin{equation*}
\begin{matrix}
\Gamma(3) & = & \{ A \in SL_3(\textgoth{o}) \; \vert \; A \equiv I_3 \text{ mod } 3\textgoth{o} \}, \\
\\
\Gamma_{\infty}(3) & = & \Gamma(3) \cap U, \\
\\
\Gamma_{\infty}(3) \backslash \Gamma(3) & = & \{ \Gamma_{\infty}(3) \cdot A \; \vert \; A \in \Gamma(3) \}.
\end{matrix}
\end{equation*}
where $I_3$ is the $3 \times 3$ identity matrix, the congruence $A \equiv I_3$ mod $3\textgoth{o}$ is computed entry by entry and $U$ denotes the subgroup of upper triangular matrices in $SL_3(\textgoth{o})$. Equivalently, $\Gamma_{\infty}(3)$ is the subgroup of upper triangular unipotent matrices in $\Gamma(3)$. 
\end{definition}

The question we will focus on is: If $A \in \Gamma(3)$, then what is a matrix representative for the orbit $\Gamma_{\infty}(3) \cdot A$? The definition of invariants we give uses \emph{wedge product matrices}. 

\begin{definition}
Let $R$ be a commutative ring and

\begin{equation*}
A = 
\begin{pmatrix}
a & b & c \\
d & e & f \\
g & h & i 
\end{pmatrix} \in M_{3 \times 3}(R).
\end{equation*}
Define the \textbf{wedge product matrices} of $A$ by
\begin{equation} \label{equation: Second Wedge Product}
\Lambda^1(A) = A \qquad \text{and} \qquad
\Lambda^2(A) = 
\begin{pmatrix}
ae - bd & af - cd & bf - ec \\
ah - bg & ai - cg & bi - hc \\
dh - eg & di - fg & ei - fh
\end{pmatrix}.
\end{equation}

The matrix
\begin{equation} \label{equation: Upsilon}
\Upsilon^1(A) = 
\begin{pmatrix}
i & -f & c \\
-h & e & -b \\
g & -d & a
\end{pmatrix}
\end{equation}

satisfies the relations 

\begin{equation} \label{equation: Adjugate}
\Lambda^2(A) \Upsilon^1(A) = \Upsilon^1(A) \Lambda^2(A) = \det(A) I_3
\end{equation}

and

\begin{equation} \label{equation: Upsilon Multiplication}
\Upsilon^1(AB) = \Upsilon^1(B) \Upsilon^1(A). 
\end{equation} 
\end{definition}

\noindent Observe that if $A \in M_{3 \times 3}(R)$ then the entries of the matrix $\Lambda^2(A)$ are all the possible $2 \times 2$ minors of $A$. The wedge product matrices are also  known as \emph{compound matrices} (see the reference \cite{Mul98} for instance). 

\begin{remark}
\emph{We call $\Upsilon^1(A)$ the first adjugate matrix of $A$ because if $B \in M_{n \times n}(R)$ then the classical notion of the adjugate matrix of $B$ is a matrix $adj(B) \in M_{n \times n}(R)$ satisfying (see \cite[Corollary 9.161]{Rot03})}

\begin{equation*}
B \; adj(B) = adj(B) \; B = \det(B) I_n
\end{equation*}
\emph{which is similar to equation \eqref{equation: Adjugate}.}
\end{remark}

Now we define the invariants of a matrix $A \in \Gamma(3)$. 

\begin{definition} \label{definition: Gamma Invariants}
The \textbf{$\Lambda^1$ and $\Lambda^2$ invariants} of 
\begin{equation*}
A = 
\begin{pmatrix}
a & b & c \\
d & e & f \\
g & h & i \\
\end{pmatrix} \in \Gamma(3)
\end{equation*}
are 
\begin{equation} \label{equation: Lambda 1 Invariants}
A_1 = g, \qquad B_1 = h, \qquad C_1 = i
\end{equation}
and
\begin{equation} \label{equation: Lambda 2 Invariants}
A_2 = dh - eg, \qquad B_2 = di-fg, \qquad C_2 = ei-fh
\end{equation}
respectively. We also define

\begin{equation*}
Inv(A) = (A_1, B_1, C_1, A_2, B_2, C_2) \in \textgoth{o}^6.
\end{equation*}
\end{definition}
\noindent The $\Lambda^1$ invariants form the bottom row of $A$ and the $\Lambda^2$ invariants form the bottom row of $\Lambda^2(A)$ (see equation \eqref{equation: Second Wedge Product}). \newline

The $\Lambda^1$ and $\Lambda^2$ invariants of a matrix $A \in \Gamma(3)$ satisfy a specific set of conditions which we define below. 

\begin{definition}
The \textbf{invariant conditions} on $(A_1, B_1, C_1, A_2, B_2, C_2) \in \textgoth{o}^6$ are

\begin{equation} \label{equation: I1}
A_1 \equiv A_2 \equiv B_1 \equiv B_2 \equiv 0 \text{ mod } 3 \textgoth{o}, \tag{I1}
\end{equation}

\begin{equation} \label{equation: I2}
C_1 \equiv C_2 \equiv 1 \text{ mod } 3 \textgoth{o}, \tag{I2}
\end{equation}

\begin{equation} \label{equation: I3}
\gcd(A_1, B_1, C_1) = \gcd(A_2, B_2, C_2) = 1, \tag{I3}
\end{equation}

\begin{equation} \label{equation: I4}
A_1 C_2 - B_1 B_2 + C_1 A_2 = 0. \tag{I4}
\end{equation}
\end{definition}

\begin{proposition} \label{proposition: Invariant Properties}
Let $A \in \Gamma(3)$. Then $Inv(A) \in \textgoth{o}^6$ satisfies the invariant conditions \eqref{equation: I1}, \eqref{equation: I2}, \eqref{equation: I3} and \eqref{equation: I4}. 
\end{proposition}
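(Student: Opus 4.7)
The plan is to verify each of the four invariant conditions separately, exploiting the two structural facts about $A \in \Gamma(3)$: first, that $A \equiv I_3 \bmod 3\textgoth{o}$ entrywise, and second, that $\det A = 1$ together with the cofactor identity $\Lambda^2(A)\Upsilon^1(A) = \det(A) I_3$ from equation \eqref{equation: Adjugate}.

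For \eqref{equation: I1} and \eqref{equation: I2}, I would simply reduce mod $3\textgoth{o}$. Writing $A \equiv I_3 \bmod 3\textgoth{o}$ entrywise gives $g \equiv h \equiv d \equiv f \equiv 0$ and $a \equiv e \equiv i \equiv 1$ mod $3\textgoth{o}$. Substituting these congruences into the definitions $A_1 = g$, $B_1 = h$, $A_2 = dh - eg$, $B_2 = di - fg$ immediately yields \eqref{equation: I1}, and similarly $C_1 = i$, $C_2 = ei - fh$ reduce to $1$ mod $3\textgoth{o}$, giving \eqref{equation: I2}.

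For \eqref{equation: I3}, the key observation is that both GCDs appear as coefficients in Laplace-type expansions of $\det(A) = 1$. Expanding $\det(A)$ along the bottom row produces an expression of the form $g X - h Y + i Z = 1$ for certain $X,Y,Z \in \textgoth{o}$ (the $2\times 2$ minors from the first two rows), which forces $\gcd(A_1, B_1, C_1) = \gcd(g, h, i)$ to divide $1$. For the second GCD, I would read off the $(3,3)$-entry of the identity $\Lambda^2(A)\Upsilon^1(A) = I_3$ from \eqref{equation: Adjugate}: the product of the bottom row $(A_2, B_2, C_2)$ of $\Lambda^2(A)$ with the third column $(c, -b, a)^T$ of $\Upsilon^1(A)$ gives $cA_2 - bB_2 + aC_2 = 1$, so $\gcd(A_2, B_2, C_2) \mid 1$.

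Condition \eqref{equation: I4} is the purely algebraic identity
\begin{equation*}
g(ei - fh) - h(di - fg) + i(dh - eg) = 0,
\end{equation*}
which I would verify either by direct expansion and term-by-term cancellation, or conceptually by recognizing the left-hand side as the cofactor expansion along the first row of the determinant of the matrix obtained from $A$ by replacing its first row with $(g, h, i)$, which has two equal rows and hence vanishes. The main obstacle is essentially bookkeeping: making sure the signs in the definitions of the $\Lambda^2$ invariants in \eqref{equation: Lambda 2 Invariants} and in $\Upsilon^1(A)$ in \eqref{equation: Upsilon} are tracked consistently with the cofactor expansions, but once that is set up correctly each of (I1)--(I4) follows in a single line.
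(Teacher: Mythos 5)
Your proposal is correct and follows essentially the same route as the paper: reduction mod $3\textgoth{o}$ for \eqref{equation: I1} and \eqref{equation: I2}, Laplace expansion of $\det(A)=1$ along the bottom row for $\gcd(A_1,B_1,C_1)=1$, and the identity $aC_2 - bB_2 + cA_2 = 1$ for $\gcd(A_2,B_2,C_2)=1$, with \eqref{equation: I4} verified as a vanishing determinant with a repeated row. The only cosmetic difference is that you extract $aC_2 - bB_2 + cA_2 = \det(A)$ from the $(3,3)$-entry of $\Lambda^2(A)\Upsilon^1(A) = \det(A)I_3$, whereas the paper obtains the same identity directly as the Laplace expansion along the top row.
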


\begin{proof}
Assume that 

\begin{equation*}
A = 
\begin{pmatrix}
a & b & c \\
d & e & f \\
g & h & i \\
\end{pmatrix} \in \Gamma(3) \qquad \text{and} \qquad Inv(A) = (A_1, B_1, C_1, A_2, B_2, C_2). 
\end{equation*} 
By using the definition of $\Gamma(3)$ in Definition \ref{definition: Gamma} and the definition of $\Lambda^1$ and $\Lambda^2$ invariants in equations \eqref{equation: Lambda 1 Invariants} and \eqref{equation: Lambda 2 Invariants}, a direct computation yields the conditions \eqref{equation: I1} and \eqref{equation: I2}. \newline
\phantom{} \newline
Since $A \in SL_3(\textgoth{o})$, then $\det(A) = 1$ and Laplace expansion (see \cite[Proposition 9.160]{Rot03}) along the bottom row of $A$ yields

\begin{equation*}
1 = g(bf - ec) - h(af - cd) + i(ae - bd) \in g \textgoth{o} + h \textgoth{o} + i \textgoth{o}.
\end{equation*}
Hence, $\gcd(A_1, B_1, C_1) = \gcd(g, h, i) = 1$. Similarly, Laplace expansion along the top row of $A$ yields

\begin{equation*}
1 = a(ei - fh) - b(di - fg) + c(dh - eg) \in C_2 \textgoth{o} + B_2 \textgoth{o} + A_2 \textgoth{o}.
\end{equation*}
So, $\gcd(A_2, B_2, C_2) = 1$. Equation \eqref{equation: I4} follows from the direct computation

\begin{equation*}
A_1 C_2 - B_1 B_2 + C_1 A_2 = g(ei - fh) - h(di - fg) + i(dh-eg) = 0. \qedhere
\end{equation*}
\end{proof}

The next theorem establishes a bijection between sets of invariants satisfying the invariant conditions and orbits in $\Gamma_{\infty}(3) \backslash \Gamma(3)$.  

\begin{theorem} \label{theorem: Gamma 3 Invariants}
Let $A, B \in \Gamma(3)$. Then, 

\begin{equation*}
\Gamma_{\infty}(3) \cdot A = \Gamma_{\infty}(3) \cdot B \qquad \text{if and only if} \qquad Inv(A) = Inv(B).
\end{equation*}
\end{theorem}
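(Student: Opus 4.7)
The plan is to prove the two directions separately, with the forward direction being nearly immediate and the backward direction hinging on one observation that uses the PID hypothesis.

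For the forward direction, suppose $B = \gamma A$ with $\gamma \in \Gamma_{\infty}(3)$. Since the bottom row of $\gamma$ is $(0,0,1)$, the bottom row of $B$ equals that of $A$, so the $\Lambda^1$ invariants agree. For the $\Lambda^2$ invariants, I will use that left-multiplication by $\gamma$ only alters the middle row of $A$ by an $\textgoth{o}$-multiple of its bottom row and leaves the bottom row fixed. This elementary row operation preserves the $2 \times 2$ minors formed by the bottom two rows of $A$, and these minors are precisely the $\Lambda^2$ invariants. (Alternatively, one can invoke multiplicativity $\Lambda^2(\gamma A) = \Lambda^2(\gamma) \Lambda^2(A)$ together with the observation that $\Lambda^2(\gamma)$ is again upper triangular unipotent, via a direct check from Equation \eqref{equation: Second Wedge Product}.)

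For the backward direction, I would put $\gamma = B A^{-1}$ and aim to show $\gamma \in \Gamma_{\infty}(3)$. Membership $\gamma \in \Gamma(3)$ is automatic since $\Gamma(3)$ is a group, so the task reduces to verifying that $\gamma$ is upper triangular unipotent. The bottom row of $\gamma$ is $(0, 0, 1)$ because the bottom row of $B$ matches that of $A$ and $AA^{-1} = I_3$. For the middle row, the equalities of $\Lambda^2$ invariants translate, after writing $(p, q, r)$ for the difference between the middle rows of $B$ and $A$, into the system $ph = qg$, $pi = rg$, $qi = rh$; equivalently, every $2 \times 2$ minor of the matrix $\begin{pmatrix} p & q & r \\ g & h & i \end{pmatrix}$ vanishes. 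Combined with $\gcd(g, h, i) = 1$, a short B\'ezout argument produces $t \in \textgoth{o}$ such that $(p, q, r) = t(g, h, i)$, so the middle row of $\gamma$ is $(0, 1, t)$. Finally, expanding $\det(\gamma) = 1$ along the first column (only the top-left entry contributes, since the first column of $\gamma$ is now known to be $(\gamma_{11}, 0, 0)^T$) forces $\gamma_{11} = 1$, and hence $\gamma \in \Gamma_{\infty}(3)$.

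The main obstacle is the middle-row step: upgrading the minor identities to the genuine scalar relation $(p, q, r) = t(g, h, i)$. This is where both the PID hypothesis on $\textgoth{o}$ and the coprimality \eqref{equation: I3} (guaranteed by Proposition \ref{proposition: Invariant Properties}) are essential, since without $\gcd(g, h, i) = 1$ two ``parallel'' vectors in $\textgoth{o}^3$ need not differ by an $\textgoth{o}$-scalar multiple. Every other step reduces to a direct matrix computation or the identity $\det(A) = \det(B) = 1$.
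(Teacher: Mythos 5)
Your proposal is correct, and although it follows the paper's overall skeleton --- the forward direction via the fact that an upper triangular unipotent $\gamma$ fixes the relevant bottom rows, and the converse via showing $D = BA^{-1}$ is upper triangular unipotent --- the verification of the converse is carried out by a genuinely different mechanism. The paper writes $A^{-1}$ out explicitly as the adjugate (using $\det A = 1$) and computes the entries of $BA^{-1}$ one at a time: $d_{31} = 0$ comes from \eqref{equation: I4}, $d_{21} = d_{32} = 0$ from determinants with repeated or linearly dependent rows, and $d_{11} = d_{22} = d_{33} = 1$ from determinant identities exploiting the equality of the $\Lambda^2$ invariants; no gcd input is needed in that direction. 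You instead argue at the level of rows: the matching bottom rows give the bottom row $(0,0,1)$ of $D$ at once (so you never invoke \eqref{equation: I4}), and equality of the $\Lambda^2$ invariants says exactly that all $2 \times 2$ minors of the $2 \times 3$ matrix formed by the difference $(p,q,r)$ of middle rows and the common bottom row $(A_1, B_1, C_1)$ vanish, which together with $\gcd(A_1, B_1, C_1) = 1$ from \eqref{equation: I3} (guaranteed by Proposition \ref{proposition: Invariant Properties}) and a B\'ezout relation in $\textgoth{o}$ forces $(p,q,r) = t(A_1, B_1, C_1)$, i.e.\ the middle row of $D$ is $(0,1,t)$; the determinant then pins down $d_{11} = 1$. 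Your route trades the paper's purely computational check for a structural ``vanishing minors plus coprimality implies proportionality'' lemma: it costs you the B\'ezout property of $\textgoth{o}$ and condition \eqref{equation: I3}, but it buys a conceptual explanation of why the bottom rows of $A$ and $\Lambda^2(A)$ determine the orbit, and it is the form of the argument that scales to $n \times n$ matrices and higher wedge powers, in the spirit of the paper's stated motivation.
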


\begin{proof}
Assume that $A, B \in \Gamma(3)$. \newline
\phantom{} \newline
$\implies$: Suppose that $\Gamma_{\infty}(3) \cdot A = \Gamma_{\infty}(3) \cdot B$. Then there exists $C \in \Gamma_{\infty}(3)$ such that $CA = B$. Since the bottom rows of $C$ and $\Lambda^2(C)$ are both $[0, 0, 1]$, a quick computation of the bottom rows of $CA$ and $\Lambda^2(CA)$ gives $Inv(A) = Inv(CA) = Inv(B)$. \newline
\phantom{} \newline
$\impliedby$: Now assume that $Inv(A) = Inv(B) = (A_1, B_1, C_1, A_2, B_2, C_2)$. Since $A \in \Gamma(3)$ is invertible, let $D = BA^{-1}$. If

\begin{equation*}
A = 
\begin{pmatrix}
a_1 & b_1 & c_1 \\
d_1 & e_1 & f_1 \\
A_1 & B_1 & C_1 \\
\end{pmatrix}
\qquad \text{and} \qquad
B = 
\begin{pmatrix}
a_2 & b_2 & c_2 \\
d_2 & e_2 & f_2 \\
A_1 & B_1 & C_1 \\
\end{pmatrix}
\end{equation*}
then

\begin{equation} \label{equation: BA-1}
D = B A^{-1} =
\begin{pmatrix}
a_2 & b_2 & c_2 \\
d_2 & e_2 & f_2 \\
A_1 & B_1 & C_1 \\
\end{pmatrix}
\begin{pmatrix}
C_2 & c_1 B_1 - b_1 C_1 & b_1 f_1 - e_1 c_1 \\
- B_2 & a_1 C_1 - c_1 A_1 & c_1 d_1 - a_1 f_1 \\
A_2 & b_1 A_1 - a_1 B_1 & a_1 e_1 - b_1 d_1 \\
\end{pmatrix}.
\end{equation}

To see that $D \in \Gamma_{\infty}(3)$, we must show that $D$ is upper triangular and unipotent. The entries $d_{ij}$ of $D$ are $d_{31} = A_1 C_2 - B_1 B_2 + A_2 C_1 = 0$ (by invariant condition \eqref{equation: I4}), 

\begin{equation*}
d_{32} = A_1(c_1 B_1 - b_1 C_1) + B_1(a_1 C_1 - c_1 A_1) + C_1(b_1 A_1 - a_1 B_1) = 0,
\end{equation*}
\begin{equation*}
d_{33} = 
\begin{vmatrix}
a_1 & b_1 & c_1 \\
d_1 & e_1 & f_1 \\
A_1 & B_1 & C_1 \\
\end{vmatrix} = \det(A) = 1,
\end{equation*} 
\begin{equation*}
d_{21} = 
\begin{vmatrix}
d_2 & e_2 & f_2 \\
d_2 & e_2 & f_2 \\
A_1 & B_1 & C_1 \\
\end{vmatrix} = 0,
\end{equation*} 
\begin{equation*}
d_{22} = 
\begin{vmatrix}
a_1 & b_1 & c_1 \\
d_2 & e_2 & f_2 \\
A_1 & B_1 & C_1 \\
\end{vmatrix} =
\begin{vmatrix}
a_1 & b_1 & c_1 \\
d_1 & e_1 & f_1 \\
A_1 & B_1 & C_1 \\
\end{vmatrix} = 
 1
\end{equation*}
since $Inv(A) = Inv(B)$ and finally, $d_{11} = \det(B) = 1$. So, $D = BA^{-1}$ is an upper triangular unipotent matrix in $\Gamma(3)$. Hence, $D \in \Gamma_{\infty}(3)$.
\end{proof}

\begin{remark}
\emph{In \cite[Page 484]{BH86}, Bump and Hoffstein define the involution $\iota: GL_3(\mathbb{C}) \rightarrow GL_3(\mathbb{C})$ by}

\begin{equation*}
^{\iota}A = 
\begin{pmatrix}
& & 1 \\
& 1 & \\
1 & & \\
\end{pmatrix}
(A^{-1})^T
\begin{pmatrix}
& & 1 \\
& 1 & \\
1 & & \\
\end{pmatrix}.
\end{equation*}
\emph{In \cite[Page 485]{BH86}, the invariants of $A \in \Gamma(3)$ are the elements of $\textgoth{o}$ which comprise the bottom rows of $A$ and $^{\iota}A$, which are denoted by $[A_1, B_1, C_1]$ and $[A_2, B_2, C_2]$ respectively. To see how this is related to Definition \ref{definition: Gamma Invariants}, we compute directly that}

\begin{equation} \label{equation: 1.4}
\Lambda^2(A) = \begin{pmatrix}
1 & & \\
& -1 & \\
& & 1 \\
\end{pmatrix} 
(^{\iota}A)
\begin{pmatrix}
1 & & \\
& -1 & \\
& & 1 \\
\end{pmatrix}.
\end{equation}
\emph{Equation \eqref{equation: 1.4} provides the crucial link between the bottom row $[A_2, B_2, C_2]$ of $^{\iota}A$ in \cite[p.~486]{BH86} and $Inv(A)$. In particular, the only difference between $A_2, B_2, C_2$ as in \cite[p.~486]{BH86} and the $\Lambda^2$ invariants in Definition \ref{definition: Gamma Invariants} is the sign of $B_2$. Thus, we have connected our approach to the invariants of $\Gamma_{\infty}(3) \backslash \Gamma(3)$ with that of Bump and Hoffstein.}
\end{remark}

\subsection{Decomposing an element of $SL_3$ with Steinberg reduction}

In this section, we will use Steinberg reduction to construct a particular decomposition of each element in $SL_3(\textgoth{o})$. In the same spirit as the Bruhat decomposition, we will also show that this decomposition is unique by decomposing $SL_3(\textgoth{o})$ into a disjoint union of three subsets and then proving uniqueness for each disjoint subset of $SL_3(\textgoth{o})$. Later, we will use the decomposition in this section to produce a matrix representative of an orbit in $\Gamma_{\infty}(3) \backslash \Gamma(3)$ with given $\Lambda^1$ and $\Lambda^2$ invariants. Let

\begin{equation} \label{equation: D3}
D(3) = \left\{
\begin{pmatrix}
i & & \\
& j & \\
& & k \\
\end{pmatrix} \in M_{3 \times 3}(\textgoth{o}) \; \vert \; ijk = 1 \right\} \subseteq SL_3(\textgoth{o}),
\end{equation}

\begin{equation} \label{equation: U3}
U(3) = \left\{
\begin{pmatrix}
1 & \alpha & \beta \\
& 1 & \gamma \\
& & 1 \\
\end{pmatrix}  \; \Big\vert \; \alpha, \beta, \gamma \in \{ 0, 1, 2 \} + \{ 0, 1, 2 \} \omega \right\} \subseteq SL_3(\textgoth{o})
\end{equation}
and let $\varphi_1, \varphi_2: SL_2(\textgoth{o}) \rightarrow SL_3(\textgoth{o})$ denote the group homomorphisms given by

\begin{equation*}
\varphi_1
\begin{pmatrix}
a & b \\
c & d \\
\end{pmatrix} = 
\begin{pmatrix}
a & b & \\
c & d &  \\
&  & 1 \\
\end{pmatrix} \qquad \text{and} \qquad
\varphi_2
\begin{pmatrix}
a & b \\
c & d \\
\end{pmatrix} = 
\begin{pmatrix}
1 & & \\
& a & b \\
& c & d \\
\end{pmatrix}.
\end{equation*}
Next we will define the disjoint subsets of $SL_3(\textgoth{o})$ required for the proof of uniqueness in the next section.

\begin{definition}
Define

\begin{equation} \label{equation: Delta 1}
\Delta_1 = \{ A = (a_{ij}) \in SL_3(\textgoth{o}) \; \vert \; \text{$a_{21} \neq 0$ or $a_{31} \neq 0$} \}, 
\end{equation}

\begin{equation} \label{equation: Delta 2}
\Delta_2 = \{ A = (a_{ij}) \in SL_3(\textgoth{o}) \; \vert \; \text{$a_{21} = a_{31} = 0$} \}, 
\end{equation}

\begin{equation} \label{equation: Delta 1 0}
\Delta_{1, 0} = \{ A = (a_{ij}) \in \Delta_1 \; \vert \; \text{$(\Lambda^2(A))_{3, 1} = a_{21} a_{32} - a_{22} a_{31} = 0$} \}
\end{equation}
and
\begin{equation} \label{equation: Delta 1 1}
\Delta_{1, 1} = \{ A = (a_{ij}) \in SL_3(\textgoth{o}) \; \vert \; \text{$(\Lambda^2(A))_{3, 1} =  a_{21} a_{32} - a_{22} a_{31} \neq 0$} \}.
\end{equation}
\end{definition}

By definition, one can verify that $\Delta_{1, 1} \subseteq \Delta_1$, $\Delta_1 = \Delta_{1, 0} \sqcup \Delta_{1, 1}$ and

\begin{equation} \label{equation: SL3 Disjoint Decomp}
SL_3(\textgoth{o}) = \Delta_1 \sqcup \Delta_2 = \Delta_{1, 0} \sqcup \Delta_{1, 1} \sqcup \Delta_2. 
\end{equation}
Now we will use Steinberg reduction to construct a specific decomposition of an element in $SL_3(\textgoth{o})$. 

\begin{theorem} \label{theorem: SL3 Existence}
Let $A \in SL_3(\textgoth{o})$. Then there exist $y_1, y_2, y_3 \in Y(\textgoth{o})$, $d \in D(3)$, $u \in U(3)$ and $C \in \Gamma_{\infty}(3)$ such that 

\begin{equation*} 
A = \varphi_2(y_1^{-1}) \varphi_1(y_2^{-1}) \varphi_2(y_3^{-1}) d u C.
\end{equation*}
Here, $Y(\textgoth{o})$ is the set in equation \eqref{equation: YR}. 
\end{theorem}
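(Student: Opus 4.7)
The plan is to use Steinberg reduction three times to successively kill the three sub-diagonal entries of $A$, reducing it to upper triangular form, and then to factor the resulting upper triangular matrix as $duC$ with $d \in D(3)$, $u \in U(3)$ and $C \in \Gamma_{\infty}(3)$. The entries will be killed in the order $(3,1), (2,1), (3,2)$, so that each successive reduction acts on rows that already begin with the correct zeros.

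Write $A = (a_{ij})$. First I would apply Steinberg reduction to the column $(a_{21}, a_{31})^T$ to obtain a specific $y_1 \in Y(\textgoth{o})$ satisfying $y_1 (a_{21}, a_{31})^T = (\gcd(a_{21}, a_{31}), 0)^T$; then $A' := \varphi_2(y_1) A$ has $(A')_{31} = 0$ and first column $(a_{11}, \gcd(a_{21}, a_{31}), 0)^T$. Next, Steinberg reduction applied to the first two entries of this column produces $y_2 \in Y(\textgoth{o})$, and since $\det(A) = 1$ combined with Laplace expansion along the first column of $A$ shows that $\gcd(a_{11}, a_{21}, a_{31})$ is a unit, the first column of $A'' := \varphi_1(y_2) A'$ has the form $(u_0, 0, 0)^T$ for some $u_0 \in \textgoth{o}^{\times}$. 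Finally, Steinberg reduction applied to the column $((A'')_{22}, (A'')_{32})^T$ yields $y_3 \in Y(\textgoth{o})$ for which the $(3,2)$ entry of $A''' := \varphi_2(y_3) A''$ vanishes. Because $\varphi_2(y_3)$ only mixes rows $2$ and $3$ of $A''$, both of which already begin with $0$, the zeros in the first column are preserved, so $A'''$ is upper triangular and still lies in $SL_3(\textgoth{o})$.

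Being upper triangular with determinant $1$, we can write $A''' = d M$ where $d \in D(3)$ is the diagonal matrix formed from the diagonal entries of $A'''$ and $M := d^{-1} A'''$ is upper triangular unipotent. Because $\{0,1,2\} + \{0,1,2\}\omega$ is a complete set of representatives for $\textgoth{o}/3\textgoth{o}$, there is a unique $u \in U(3)$ whose three above-diagonal entries agree with those of $M$ modulo $3\textgoth{o}$; then $C := u^{-1} M$ is an upper triangular unipotent matrix congruent to $I_3$ modulo $3\textgoth{o}$, hence lies in $\Gamma_{\infty}(3)$. Rearranging the identity $\varphi_2(y_3) \varphi_1(y_2) \varphi_2(y_1) A = d u C$ using the fact that $\varphi_1, \varphi_2$ are group homomorphisms gives the desired decomposition.

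The main obstacle is mostly bookkeeping: one must check that each successive Steinberg step preserves the zeros already created (which is immediate from the block structure of $\varphi_1$ and $\varphi_2$), and one must invoke $\det(A) = 1$ to guarantee that the diagonal entries of $A'''$ all lie in $\textgoth{o}^{\times}$ so that $d$ really lies in $D(3)$.
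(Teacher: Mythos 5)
Your proposal is correct and follows essentially the same route as the paper: Steinberg reduction applied via $\varphi_2, \varphi_1, \varphi_2$ to clear the $(3,1)$, $(2,1)$ and $(3,2)$ entries (using $\det(A)=1$ to see the resulting diagonal entries are units), followed by splitting the upper triangular matrix as $d$ times a unipotent part, which is then factored as $u C$ with $u \in U(3)$ chosen to match the superdiagonal entries modulo $3\textgoth{o}$ and $C \in \Gamma_{\infty}(3)$. This is exactly the paper's argument, so no further comparison is needed.
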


\begin{proof}
Assume that 

\begin{equation*}
A = 
\begin{pmatrix}
a & b & c \\
d & e & f \\
g & h & i \\
\end{pmatrix} \in SL_3(\textgoth{o}). 
\end{equation*} 
\textbf{Step 1:} (First column of $A$) Use Steinberg reduction on the first column to construct specific matrices $y_1, y_2 \in Y(\textgoth{o})$ which satisfy  

\begin{equation*}
y_1 
\begin{pmatrix}
d \\
g
\end{pmatrix} = 
\begin{pmatrix}
\gcd(d, g) \\
0
\end{pmatrix} \qquad \text{and} \qquad 
y_2
\begin{pmatrix}
a \\
\gcd(d, g)
\end{pmatrix} = 
\begin{pmatrix}
p \\
0
\end{pmatrix}.
\end{equation*}
Here, $p = \gcd(a, d, g) \in \textgoth{o}^{\times}$. Consequently, 

\begin{equation*}
\varphi_1(y_2) \varphi_2(y_1) A = 
\begin{pmatrix}
p & b' & c' \\
0 & e'  & f' \\
0 & h' & i'
\end{pmatrix}.
\end{equation*}
\textbf{Step 2:} (Second column of $A$) Use Steinberg reduction on the second column of $\varphi_1(y_2) \varphi_2(y_1) A$ to construct a specific matrix $y_3 \in Y(\textgoth{o})$ such that

\begin{equation*}
y_3 
\begin{pmatrix}
e' \\
h'
\end{pmatrix} = 
\begin{pmatrix}
q \\
0
\end{pmatrix} \qquad \text{where} \qquad q = \gcd(e', h') \in \textgoth{o}^{\times}. 
\end{equation*} 
Thus, 

\begin{equation*}
\varphi_2(y_3) \varphi_1(y_2) \varphi_2(y_1) A = 
\begin{pmatrix}
p & x & y \\
0 & q & z \\
0 & 0 & r
\end{pmatrix}.
\end{equation*}
\textbf{Step 3:} (Decomposing the upper triangular matrix) On the RHS, we have

\begin{equation*}
\begin{pmatrix}
p & x & y \\
0 & q & z \\
0 & 0 & r
\end{pmatrix} = 
\begin{pmatrix}
p & & \\
& q & \\
& & r \\
\end{pmatrix}
\begin{pmatrix}
1 & qrx & qr y \\
& 1 & pr z \\
& & 1 \\
\end{pmatrix}.
\end{equation*}
because $pqr = 1$. If $a + b \omega \in \textgoth{o}$ then define the map

\begin{equation} \label{equation: Mod 3 Map}
\begin{matrix}
(-)_3: & \textgoth{o} & \rightarrow & \{ 0, 1, 2 \} + \{0, 1, 2 \} \omega \\
& a + b \omega & \mapsto & (a + b \omega)_3 = d + e \omega
\end{matrix}
\end{equation}
where $d, e \in \{ 0, 1, 2 \}$ satisfy the congruence relations $d \equiv a$ mod 3 and $e \equiv b$ mod 3. We have

\begin{equation*}
\begin{pmatrix}
1 & qrx & qr y \\
& 1 & pr z \\
& & 1 \\
\end{pmatrix} = 
\begin{pmatrix}
1 & \alpha & \beta \\
& 1 & \gamma \\
& & 1 \\
\end{pmatrix}
\begin{pmatrix}
1 & qrx - \alpha & qr y - \alpha(prz - \gamma) - \beta \\
& 1 & prz - \gamma \\
& & 1 \\
\end{pmatrix}
\end{equation*}
where 

\begin{equation} \label{equation: Mod 3 Condition}
\alpha = (qrx)_3, \qquad \beta = (qr y - \alpha(prz - \gamma))_3 \qquad \text{and} \qquad \gamma = (prz)_3.
\end{equation}
Equation \eqref{equation: Mod 3 Condition} ensures that the matrix

\begin{equation*}
C = 
\begin{pmatrix}
1 & qrx - \alpha & qr y - \alpha(prz - \gamma) - \beta \\
& 1 & prz - \gamma \\
& & 1 \\
\end{pmatrix} \in \Gamma_{\infty}(3). 
\end{equation*}
and

\begin{equation*}
\begin{pmatrix}
1 & \alpha & \beta \\
& 1 & \gamma \\
& & 1 \\
\end{pmatrix} \in U(3).
\end{equation*}
Putting all of the computations together, we obtain the decomposition

\begin{equation} \label{equation: SL3 Existence}
A = \varphi_2(y_1^{-1}) \varphi_1(y_2^{-1}) \varphi_2(y_3^{-1}) 
\begin{pmatrix}
p & & \\
& q & \\
& & r \\
\end{pmatrix}
\begin{pmatrix}
1 & \alpha & \beta \\
& 1 & \gamma \\
& & 1 \\
\end{pmatrix}
C.
\end{equation}
\end{proof}

The most important special cases of Theorem \ref{theorem: SL3 Existence} are the decompositions of elements in $\Delta_{1, 0}$ and $\Delta_2$. If $A \in \Delta_{1, 0}$ then by equation \eqref{equation: Delta 1 0}, either

\begin{equation*}
A = 
\begin{pmatrix}
a & b & c \\
d & e & f \\
0 & 0 & i
\end{pmatrix} \qquad \text{or} \qquad 
A = 
\begin{pmatrix}
a & b & c \\
0 & 0 & f \\
g & h & i
\end{pmatrix}
\end{equation*}
where $d, g, h \in \textgoth{o} - \{ 0 \}$. By applying Theorem \ref{theorem: SL3 Existence} to $A$, we obtain the decomposition

\begin{equation} \label{equation: Existence Delta 1 0}
A = \varphi_2(y_1^{-1}) \varphi_1(y_2^{-1}) \varphi_2(I_2) d u C = \varphi_2(y_1^{-1}) \varphi_1(y_2^{-1}) d u C
\end{equation}
where $y_1, y_2 \in Y(\textgoth{o})$, $d \in D(3)$, $u \in U(3)$, $C \in \Gamma_{\infty}(3)$ and $I_2$ is the $2 \times 2$ identity matrix. If $A \in \Delta_2$ then by equation \eqref{equation: Delta 2},

\begin{equation*}
A = 
\begin{pmatrix}
a & b & c \\
0 & e & f \\
0 & h & i
\end{pmatrix}.
\end{equation*}
By Theorem \ref{theorem: SL3 Existence}, we obtain the decomposition

\begin{equation} \label{equation: Existence Delta 2}
A = \varphi_2(I_2) \varphi_1(I_2) \varphi_2(y^{-1}) d u C = \varphi_2(y^{-1}) d u C
\end{equation}
where $y \in Y(\textgoth{o})$, $d \in D(3)$, $u \in U(3)$ and $C \in \Gamma_{\infty}(3)$. 

\subsection{Uniqueness of the decomposition} 

By equation \eqref{equation: Delta 1 1}, whether a matrix $A \in SL_3(\textgoth{o})$ belongs to the set $\Delta_{1, 1}$ depends on the wedge product matrix $\Lambda^2(A)$, defined in equation \eqref{equation: Second Wedge Product}. Hence when dealing with $\Delta_{1, 1}$ in this section, we will make use of the following important properties of $\Lambda^2(A)$ which are proved by direct computation.

\begin{lemma} \label{lemma: Wedge 2}
Let $A, B \in SL_3(\textgoth{o})$ and $y \in M_{2 \times 2}(\textgoth{o})$. Then

\begin{enumerate}
\item $\Lambda^2(AB) = \Lambda^2(A) \Lambda^2(B)$, \\
\item $\Lambda^2(\varphi_1(y)) = \varphi_2(y)$, \\
\item $\Lambda^2(\varphi_2(y)) = \varphi_1(y)$. 
\end{enumerate}
\end{lemma}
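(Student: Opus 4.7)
The plan is to verify the three identities separately. Parts (2) and (3) are routine computations directly from the definition of $\Lambda^2$ in \eqref{equation: Second Wedge Product}, while part (1) follows cleanly from the adjugate-type identities \eqref{equation: Adjugate} and \eqref{equation: Upsilon Multiplication} that $\Upsilon^1$ already satisfies.

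For parts (2) and (3), note first that $\varphi_1$ and $\varphi_2$ are defined on $SL_2(\textgoth{o})$, so $\det(y) = 1$. I would substitute the entries of $\varphi_1(y)$ (respectively $\varphi_2(y)$) into the nine $2 \times 2$ minor formulas in \eqref{equation: Second Wedge Product}. The block of zeros in $\varphi_i(y)$ kills most terms, leaving a $2 \times 2$ block equal to $y$ itself, a $\det(y) = 1$ in the diagonal slot opposite the $y$-block, and zeros elsewhere. For $\varphi_1(y) = \left(\begin{smallmatrix} y & 0 \\ 0 & 1 \end{smallmatrix}\right)$ this produces exactly $\varphi_2(y)$, and for $\varphi_2(y)$ it produces exactly $\varphi_1(y)$.

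For part (1), since $A, B \in SL_3(\textgoth{o})$ we have $\det(A) = \det(B) = \det(AB) = 1$, so \eqref{equation: Adjugate} specializes to $\Lambda^2(X) \Upsilon^1(X) = I_3$ for each $X \in \{A, B, AB\}$. In particular $\Upsilon^1(X)$ is invertible over $\textgoth{o}$ with $\Upsilon^1(X)^{-1} = \Lambda^2(X)$. Inverting both sides of \eqref{equation: Upsilon Multiplication} and applying this identification then gives
\begin{equation*}
\Lambda^2(AB) = \Upsilon^1(AB)^{-1} = \bigl(\Upsilon^1(B) \Upsilon^1(A)\bigr)^{-1} = \Upsilon^1(A)^{-1} \Upsilon^1(B)^{-1} = \Lambda^2(A) \Lambda^2(B).
\end{equation*}

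I do not anticipate any real obstacle: part (1) is a one-line consequence of the adjugate identities already recorded in the paper, and parts (2) and (3) are purely a matter of unpacking the definition. The only mild wrinkle worth flagging is that the identities in (2) and (3) genuinely require $\det(y) = 1$ (otherwise a factor of $\det(y)$ appears in one diagonal entry of $\Lambda^2(\varphi_i(y))$), which is automatic from the stated domain $SL_2(\textgoth{o})$ of $\varphi_1, \varphi_2$.
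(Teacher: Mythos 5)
Your proposal is correct. The paper itself offers no written proof: it simply asserts that all three identities ``are proved by direct computation,'' and indeed parts (2) and (3) of your argument are exactly that computation (the minors of $\varphi_1(y)$ and $\varphi_2(y)$ collapse to the block pattern you describe, with $\det(y)$ landing in the complementary diagonal slot). Where you genuinely diverge is part (1): instead of verifying multiplicativity of $\Lambda^2$ entrywise (or invoking Cauchy--Binet), you deduce it from the recorded identities \eqref{equation: Adjugate} and \eqref{equation: Upsilon Multiplication}, using $\det = 1$ to identify $\Lambda^2(X)$ with $\Upsilon^1(X)^{-1}$ and then inverting the anti-homomorphism property of $\Upsilon^1$. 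This is valid within the paper's logical structure, since \eqref{equation: Upsilon Multiplication} is stated independently of the lemma; the trade-off is that it only establishes (1) for invertible matrices (which matches the lemma's hypothesis $A, B \in SL_3(\textgoth{o})$) and it shifts the computational burden onto \eqref{equation: Upsilon Multiplication}, which the paper likewise leaves as an unproved direct computation, so the gain is organizational rather than substantive. Your closing remark is also a worthwhile catch: as literally stated the lemma allows $y \in M_{2 \times 2}(\textgoth{o})$, but parts (2) and (3) hold only when $\det(y) = 1$ (otherwise $\Lambda^2(\varphi_1(y))$ has $\det(y)$ rather than $1$ in the $(1,1)$ entry, and similarly for $\varphi_2$), so the hypothesis should be read as $y \in SL_2(\textgoth{o})$, consistent with the stated domain of $\varphi_1$ and $\varphi_2$ and with every use of the lemma in the paper.
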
 

Lemma \ref{lemma: Matrix Not Identity} is useful for the proofs of uniqueness of the decompositions in equation \eqref{equation: Existence Delta 1 0} and equation \eqref{equation: SL3 Existence}.

\begin{lemma} \label{lemma: Matrix Not Identity}
Let $y_1, y_2, y_3 \in Y(\textgoth{o})$, $d \in D(3)$, $u \in U(3)$ and $C \in \Gamma_{\infty}(3)$. Let 

\begin{equation*}
A = \varphi_2(y_1^{-1}) \varphi_1(y_2^{-1}) \varphi_2(y_3^{-1}) d u C.
\end{equation*}

\begin{enumerate}
\item If $A \in \Delta_1$ then $y_2 \neq I_2$.
\item If $A \in \Delta_{1, 1}$ then $y_3 \neq I_2$. 
\end{enumerate}
\end{lemma}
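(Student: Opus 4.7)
The plan for both parts is the same: argue by contrapositive, showing that if $y_2 = I_2$ (resp.\ $y_3 = I_2$), then $A \in \Delta_2$ (resp.\ $(\Lambda^2(A))_{3,1} = 0$), contradicting the hypothesis. Both arguments reduce to tracking the first column of $A$ or $\Lambda^2(A)$ through the decomposition.

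For part (1), assuming $y_2 = I_2$ forces $\varphi_1(y_2^{-1}) = I_3$ to drop out, and since $\varphi_2$ is a group homomorphism the decomposition collapses to $A = \varphi_2(y_1^{-1} y_3^{-1}) d u C$. I would then observe that every factor on the right fixes the line $\textgoth{o} e_1$: the unipotent upper triangular matrices $C \in \Gamma_\infty(3)$ and $u \in U(3)$ fix $e_1$, the diagonal matrix $d \in D(3)$ scales it, and $\varphi_2$ of any $2 \times 2$ matrix fixes $e_1$ because $\varphi_2$ embeds its argument into the lower-right $2 \times 2$ block. Multiplying right-to-left shows that $A e_1$ is a scalar multiple of $e_1$, so $a_{21} = a_{31} = 0$, placing $A$ in $\Delta_2$ and contradicting $A \in \Delta_1$.

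For part (2), assuming $y_3 = I_2$ yields $A = \varphi_2(y_1^{-1}) \varphi_1(y_2^{-1}) d u C$. I would apply $\Lambda^2$ and use Lemma \ref{lemma: Wedge 2} (multiplicativity together with $\Lambda^2 \circ \varphi_1 = \varphi_2$ and $\Lambda^2 \circ \varphi_2 = \varphi_1$) to write
\begin{equation*}
\Lambda^2(A) = \varphi_1(y_1^{-1}) \varphi_2(y_2^{-1}) \Lambda^2(d) \Lambda^2(u) \Lambda^2(C).
\end{equation*}
A direct computation from \eqref{equation: Second Wedge Product} shows that $\Lambda^2$ of a diagonal matrix is diagonal and $\Lambda^2$ of an upper triangular unipotent matrix is again upper triangular unipotent. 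Applying the same ``preserve $e_1$ up to scaling'' logic as in part (1), the vector $\varphi_2(y_2^{-1}) \Lambda^2(d) \Lambda^2(u) \Lambda^2(C) e_1$ comes out as a scalar multiple of $e_1$. Applying $\varphi_1(y_1^{-1})$ last keeps the third entry zero, since the bottom row of $\varphi_1$ of anything is $(0, 0, 1)$. Hence $(\Lambda^2(A))_{3,1} = 0$, so $A \notin \Delta_{1,1}$.

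The driving observation in both parts is that the ``missing'' factor is precisely the one that could have introduced nonzero entries in positions $(2,1)$ and $(3,1)$ of $A$ and $\Lambda^2(A)$ respectively; its absence forces those entries to vanish. I do not anticipate any significant obstacle, since the argument is routine linear-algebraic bookkeeping with the explicit shapes of $\varphi_1$, $\varphi_2$, $U(3)$, $D(3)$, $\Gamma_\infty(3)$, plus Lemma \ref{lemma: Wedge 2} for part (2).
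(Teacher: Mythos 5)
Your proposal is correct and follows essentially the same route as the paper: for part (1) the vanishing of $\varphi_1(y_2^{-1})$ collapses the product into a form visibly lying in $\Delta_2$, and for part (2) one applies Lemma \ref{lemma: Wedge 2} to get $\Lambda^2(A) = \varphi_1(y_1^{-1})\varphi_2(y_2^{-1})\Lambda^2(duC)$ and checks that its $(3,1)$ entry vanishes. Your explicit tracking of the first column (the ``$e_1$ up to scaling'' bookkeeping) is just a spelled-out version of the paper's ``direct computation,'' so the two arguments coincide.
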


\begin{proof}
Assume that $y_1, y_2, y_3 \in Y(\textgoth{o})$, $d \in D(3)$, $u \in U(3)$ and $C \in \Gamma_{\infty}(3)$. Let 

\begin{equation*}
A = \varphi_2(y_1^{-1}) \varphi_1(y_2^{-1}) \varphi_2(y_3^{-1}) d u C.
\end{equation*}
If $y_2 = I_2$ then by direct computation

\begin{equation*}
A = \varphi_2(y_1^{-1}) \varphi_2(y_3^{-1}) d u C \in \Delta_2. 
\end{equation*}
Since $SL_3(\textgoth{o}) = \Delta_1 \sqcup \Delta_2$ then $A \not \in \Delta_1$. Next assume that $y_3 = I_2$. By Lemma \ref{lemma: Wedge 2} and another direct computation, the $3, 1$ entry of the matrix

\begin{equation*}
\Lambda^2(A) = \Lambda^2(\varphi_2(y_1^{-1}) \varphi_1(y_2^{-1}) d u C) = \varphi_1(y_1^{-1}) \varphi_2(y_2^{-1}) \Lambda^2(duC)
\end{equation*}
is equal to 0. By equation \eqref{equation: Delta 1 1}, we must have $A \not \in \Delta_{1, 1}$. 
\end{proof}

We will repeatedly use the next lemma in our proofs of uniqueness. 

\begin{lemma} \label{lemma: Y Equality}
Let

\begin{equation*}
A = 
\begin{pmatrix}
p & q \\
r & s 
\end{pmatrix} \qquad \text{and} \qquad B = 
\begin{pmatrix}
p' & q' \\
r' & s'
\end{pmatrix}
\end{equation*} 
be elements of $Y(\textgoth{o}) - \{ I_2 \}$. If $rs' - r' s = 0$ then $A = B$. 
\end{lemma}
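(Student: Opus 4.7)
The plan is to exploit the two normalizations built into the definition of $Y(\textgoth{o})$ in equation \eqref{equation: YR}: for a matrix in $Y(\textgoth{o}) - \{I_2\}$, the lower-left entry is a fixed representative of its $\textgoth{o}^{\times}$-orbit in $\textgoth{o} - \{0\}$, and the upper-left entry is a fixed representative of its residue class modulo the lower-left entry. Together with the $SL_2$ relation $ps - qr = 1$ (which forces $\gcd(r,s) = 1$), these rigidities should pin down all four entries from the single scalar condition $rs' - r's = 0$.

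First I would deduce $r = r'$. From $ps - qr = 1$ and $p's' - q'r' = 1$ we have $\gcd(r, s) = 1$ and $\gcd(r', s') = 1$. The hypothesis $rs' = r's$ combined with $\gcd(r, s) = 1$ gives $r \mid r'$, and symmetrically $r' \mid r$; thus $r$ and $r'$ are associates in $\textgoth{o}$. Because $r, r' \in (\textgoth{o} - \{0\})/\textgoth{o}^{\times}$ are both taken to be the distinguished representatives of their $\textgoth{o}^{\times}$-orbits, this forces $r = r'$. Substituting back into $rs' = r's$ gives $r(s' - s) = 0$, and since $r \neq 0$ and $\textgoth{o}$ is an integral domain, $s = s'$.

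Next I would deduce $p = p'$ and $q = q'$. Subtracting the two determinant equations yields $(p - p')s = (q - q')r$. Since $\gcd(r, s) = 1$, it follows that $r \mid (p - p')$, that is $p \equiv p' \pmod{r\textgoth{o}}$. By the defining condition that $p, p'$ are the fixed representatives of $\textgoth{o}/r\textgoth{o}$, we conclude $p = p'$. Then $(q - q')r = 0$ together with $r \neq 0$ forces $q = q'$, completing the proof.

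The proof is essentially a bookkeeping exercise, and I do not anticipate a real obstacle: the substantive content of the lemma is just that the $Y(\textgoth{o})$-conventions rigidify each matrix once the projective ratio $r : s$ of its bottom row is specified. The only place where one must be a little careful is in invoking the two normalizations (on $r$ and on $p$) at the correct moment, and in recording the reason $r \neq 0$ (namely, that the case $A = I_2$ has been excluded by hypothesis).
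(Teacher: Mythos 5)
Your proof is correct and follows essentially the same route as the paper: first pin down $r=r'$ and $s=s'$ using $\gcd(r,s)=\gcd(r',s')=1$ and the normalization $r,r'\in(\textgoth{o}-\{0\})/\textgoth{o}^{\times}$, then use the normalization $p,p'\in\textgoth{o}/r\textgoth{o}$ together with the determinant condition to get $p=p'$ and $q=q'$. The only cosmetic difference is that the paper invokes the uniqueness statement from Steinberg reduction for the last step where you subtract the two determinant equations directly; the underlying idea is identical.
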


\begin{proof}
Assume that $A, B \in Y(\textgoth{o}) - \{ I_2 \}$ are the matrices defined in the statement of the lemma. Assume that $r s' - r' s = 0$. By definition of $Y(\textgoth{o})$ in equation \eqref{equation: YR}, $r, r' \in (\textgoth{o} - \{ 0 \})/\textgoth{o}^{\times}$. Hence

\begin{equation*}
\frac{s'}{r'} = \frac{s}{r}. 
\end{equation*}
Now $\det(A) = \det(B) = 1$ and subsequently, $\gcd(r, s) = \gcd(r', s') = 1$. So, the fractions in the above equation are fully simplified. In tandem with the fact that $r, r' \in (\textgoth{o} - \{ 0 \})/\textgoth{o}^{\times}$, we find that $r = r'$ and consequently $s = s'$. Arguing as in Steinberg reduction, there exists a unique pair of elements $x, y \in \textgoth{o}$ such that 

\begin{equation*}
x \in \textgoth{o}/r \textgoth{o} = \textgoth{o}/r' \textgoth{o} \qquad \text{and} \qquad x s - y r = x s' - y r' =  1. 
\end{equation*}
Observe that the pairs of elements $(p, q)$ and $(p', q')$ also satisfy the above conditions. By uniqueness, $p = x = p'$ and $q = y = q'$. Hence $A = B$ as required. 
\end{proof}

Below, we prove the uniqueness of the decompsitions of elements in $\Delta_{1, 0}, \Delta_{1, 1}$ and $\Delta_2$. The following uniqueness theorems are stated as cell decompositions in order to emphasise the connection between Steinberg reduction, used to produce the decompositions in equations \eqref{equation: SL3 Existence}, \eqref{equation: Existence Delta 1 0} and \eqref{equation: Existence Delta 2}, and the Bruhat decomposition in \cite[Theorem 15]{Ste67}. We begin with the $\Delta_2$ case. 

\begin{theorem} \label{theorem: Uniqueness Delta 2}
Let $\Delta_2$ be the subset of $SL_3(\textgoth{o})$ defined in equation \eqref{equation: Delta 2}. We have the following equality:

\begin{equation*}
\Delta_2 = \bigsqcup_{y \in Y(\textgoth{o})} \bigsqcup_{d \in D(3)} \bigsqcup_{u \in U(3)} \varphi_2(y^{-1}) du \Gamma_{\infty}(3). 
\end{equation*}
\end{theorem}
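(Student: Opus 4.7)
The plan is to verify the three ingredients of the claimed disjoint union equality: that each coset $\varphi_2(y^{-1}) du \, \Gamma_{\infty}(3)$ lies in $\Delta_2$, that every matrix in $\Delta_2$ belongs to some such coset, and that distinct triples $(y, d, u)$ produce disjoint cosets.

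Containment is a short first-column computation. The matrix $\varphi_2(y^{-1})$ has first column $(1, 0, 0)^T$, so $\varphi_2(y^{-1}) d$ has first column $(d_{11}, 0, 0)^T$. Right-multiplication by $u \in U(3)$ and any $C \in \Gamma_{\infty}(3)$ (both upper triangular unipotent, with first column $e_1$) preserves this shape, placing the coset in $\Delta_2$. Coverage is precisely the content of equation \eqref{equation: Existence Delta 2}.

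For disjointness, suppose $\varphi_2(y_1^{-1}) d_1 u_1 C_1 = \varphi_2(y_2^{-1}) d_2 u_2 C_2$. Using that $\varphi_2$ is a group homomorphism, this rearranges to
$$\varphi_2(y_2 y_1^{-1}) = d_2 \bigl( u_2 C_2 C_1^{-1} u_1^{-1} \bigr) d_1^{-1},$$
whose right-hand side is upper triangular because the conjugated factor $u_2 C_2 C_1^{-1} u_1^{-1}$ is upper triangular unipotent and conjugation by diagonals preserves upper triangularity. The $(3, 2)$-entry of $\varphi_2(y_2 y_1^{-1})$ equals the $(2, 1)$-entry of $y_2 y_1^{-1}$; expanding this entry in terms of $y_i = \begin{pmatrix} p_i & q_i \\ r_i & s_i \end{pmatrix}$ gives $r_2 s_1 - s_2 r_1 = 0$. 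If $y_1 = I_2$ then $r_1 = 0$ and $s_1 = 1$, forcing $r_2 = 0$ and hence $y_2 = I_2$ by the definition of $Y(\textgoth{o})$; the symmetric case is identical. Otherwise both $y_i \in Y(\textgoth{o}) - \{I_2\}$ and Lemma \ref{lemma: Y Equality} directly yields $y_1 = y_2$.

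Once $y_1 = y_2$, the identity reduces to $d_2 u_2 C_2 C_1^{-1} u_1^{-1} d_1^{-1} = I_3$. Reading off diagonal entries (the intermediate unipotent factor contributes only $1$'s on the diagonal) forces $d_1 = d_2$, leaving $u_1^{-1} u_2 = C_1 C_2^{-1} \in \Gamma_{\infty}(3)$. Writing $u_i = \begin{pmatrix} 1 & \alpha_i & \beta_i \\ & 1 & \gamma_i \\ & & 1 \end{pmatrix}$ with entries in $\{0, 1, 2\} + \{0, 1, 2\}\omega$, the $\Gamma_{\infty}(3)$-condition places all off-diagonal entries of $u_1^{-1} u_2$ in $3 \textgoth{o}$. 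Uniqueness of representatives modulo $3 \textgoth{o}$ in the set $\{0, 1, 2\} + \{0, 1, 2\}\omega$ then forces $\alpha_1 = \alpha_2$, $\gamma_1 = \gamma_2$, and subsequently $\beta_1 = \beta_2$, yielding $u_1 = u_2$ and finally $C_1 = C_2$. The main obstacle is the extraction of $y_1 = y_2$ from upper triangularity; once Lemma \ref{lemma: Y Equality} handles this, the remaining steps are a routine diagonal-then-modular unwinding.
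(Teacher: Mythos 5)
Your proof is correct and takes essentially the same route as the paper: containment via the first-column computation, coverage via equation \eqref{equation: Existence Delta 2}, and uniqueness by extracting $r_2 s_1 - s_2 r_1 = 0$ (the paper's $q(ml'-m'l)=0$ with the unit divided out), splitting into the same identity/non-identity cases handled by Lemma \ref{lemma: Y Equality}, and then comparing diagonals and reducing modulo $3\textgoth{o}$ to force $d$, $u$, $C$ to agree. The only cosmetic difference is that you move all of $d,u,C$ to one side before comparing entries (and your ``conjugation by diagonals'' remark is really just that a product of upper triangular matrices is upper triangular), which changes nothing of substance.
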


\begin{proof}
First assume that $y \in Y(\textgoth{o})$, $d \in D(3)$, $u \in U(3)$ and $C \in \Gamma_{\infty}(3)$. Let

\begin{equation*}
A = (a_{ij}) = \varphi_2(y^{-1}) d u C. 
\end{equation*}
By direct computation, $a_{21} = a_{31} = 0$ and so $A \in \Delta_2$. \newline

Conversely, assume that $A \in \Delta_2$. By equation \eqref{equation: Existence Delta 2}, there exist $y \in Y(\textgoth{o})$, $d \in D(3)$, $u \in U(3)$ and $C \in \Gamma_{\infty}(3)$ such that $A = \varphi_2(y^{-1}) d u C$. We will now show that this decomposition of $A$ is unique. To this end, assume that there exist $z \in Y(\textgoth{o})$, $d' \in D(3)$, $u' \in U(3)$ and $C' \in \Gamma_{\infty}(3)$ such that 

\begin{equation*}
\varphi_2(y^{-1}) d u C = A = \varphi_2(z^{-1}) d' u' C'. 
\end{equation*}
We will first show that $y = z$. Let
\begin{equation*}
y = 
\begin{pmatrix}
j & k \\
l & m
\end{pmatrix}, \qquad 
z = 
\begin{pmatrix}
j' & k' \\
l' & m' 
\end{pmatrix}, \qquad
d = 
\begin{pmatrix}
p & & \\
& q & \\
& & r
\end{pmatrix}, \qquad 
d' =
\begin{pmatrix}
p' & & \\
& q' & \\
& & r'
\end{pmatrix},
\end{equation*}
\begin{equation*}
u = 
\begin{pmatrix}
1 & \alpha & \beta \\
& 1 & \gamma \\
& & 1
\end{pmatrix} \qquad \text{and} \qquad
u' =
\begin{pmatrix}
1 & \alpha' & \beta' \\
& 1 & \gamma' \\
& & 1
\end{pmatrix}.
\end{equation*}
Then 

\begin{equation} \label{equation: Delta 2 Uniqueness}
\begin{pmatrix}
p & \ast & \ast \\
& qmj' - qlk' & \ast \\
& qml' - q m'l & \ast
\end{pmatrix} = \varphi_2(z) \varphi_2(y^{-1}) d u C = d'u' C' =
\begin{pmatrix}
p' & \ast & \ast \\
& q' & \ast \\
& & r'
\end{pmatrix}.
\end{equation} 
In particular, $q(ml' - m' l) = 0$ and since $q \in \textgoth{o}^{\times}$ then $ml' - m' l = 0$. There are now three cases to consider: \newline

\textbf{Case 1:} $l, l' \neq 0$. \newline

If $l, l' \neq 0$ then by definition of $Y(\textgoth{o})$ in equation \eqref{equation: YR}, $y, z \in Y(\textgoth{o}) - \{ I_2 \}$. By Lemma \ref{lemma: Y Equality}, we have $y = z$. \newline

\textbf{Case 2:} $l = 0$. \newline

If $l = 0$ then since $y \in Y(\textgoth{o})$, $y = I_2$. By equation \eqref{equation: Delta 2 Uniqueness}, $0 = q(ml' - m' l) = q(1l' - m'(0)) = ql'$. So $l' = 0$ and since $z \in Y(\textgoth{o})$, $z = I_2 = y$. \newline

\textbf{Case 3:} $l' = 0$. \newline

If $l' = 0$ then since $z \in Y(\textgoth{o})$, $z = I_2$. By equation \eqref{equation: Delta 2 Uniqueness}, $0 = q(ml' - m' l) = q(m(0) - 1l) = -ql$. Hence $l = 0$ and since $y \in Y(\textgoth{o})$ then $y = I_2 = z$. Cases 1, 2 and 3 together demonstrate that $y = z$. \newline

By invertibility of $y$, we now have $duC = d' u' C'$. Comparing the diagonal entries on both sides of the equation, we deduce that $p = p'$, $q = q'$, $r = r'$ and $d = d'$. By invertibility of $d$, $u C = u' C'$. Reducing both sides of this equation modulo $3 \textgoth{o}$, we obtain the congruence

\begin{equation*}
\begin{pmatrix}
1 & \alpha & \beta \\
& 1 & \gamma \\
& & 1
\end{pmatrix} \equiv
\begin{pmatrix}
1 & \alpha' & \beta' \\
& 1 & \gamma' \\
& & 1
\end{pmatrix} \text{ mod } 3 \textgoth{o}. 
\end{equation*}
By definition of $U(3)$ in equation \eqref{equation: U3}, $\alpha, \beta, \gamma, \alpha', \beta', \gamma' \in \{ 0, 1, 2 \} + \{0, 1, 2 \} \omega$. So, the above congruence is actually an equality and $u = u'$. Finally by invertibility of $u$, $C = C'$. We conclude that the decomposition of $A \in \Delta_2$ in equation \eqref{equation: Existence Delta 2} is unique. Consequently

\begin{equation*}
\Delta_2 \subseteq \bigsqcup_{y \in Y(\textgoth{o})} \bigsqcup_{d \in D(3)} \bigsqcup_{u \in U(3)} \varphi_2(y^{-1}) du \Gamma_{\infty}(3)
\end{equation*}
as required. 
\end{proof}

The next case we deal with is $\Delta_{1, 0}$. 

\begin{theorem} \label{theorem: Uniqueness Delta 1 0}
Let $\Delta_{1, 0}$ be the subset of $SL_3(\textgoth{o})$ defined in equation \eqref{equation: Delta 1 0}. We have the following equality:

\begin{equation*}
\Delta_{1, 0} = \bigsqcup_{\substack{y_1, y_2 \in Y(\textgoth{o}), \\ y_2 \neq I_2}} \bigsqcup_{d \in D(3)} \bigsqcup_{u \in U(3)} \varphi_2(y_1^{-1}) \varphi_1(y_2^{-1}) du \Gamma_{\infty}(3). 
\end{equation*}
\end{theorem}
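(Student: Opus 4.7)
The plan is to mirror the three-part structure of the proof of Theorem \ref{theorem: Uniqueness Delta 2}: first check the easy inclusion $\mathrm{RHS} \subseteq \Delta_{1,0}$, then invoke the existing decomposition \eqref{equation: Existence Delta 1 0} for the opposite inclusion, and finally establish disjointness of the union by extracting the factors $y_1$, $y_2$, $d$, $u$, $C$ in that order.

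For the easy inclusion, take $A = \varphi_2(y_1^{-1})\varphi_1(y_2^{-1}) duC$ with $y_2 \neq I_2$. Writing $y_i = \begin{pmatrix} a_i & b_i \\ c_i & d_i \end{pmatrix}$ and $d = \begin{pmatrix} p & & \\ & q & \\ & & r \end{pmatrix}$, a direct computation shows that the first column of $A$ is $(d_2 p,\, -d_1 c_2 p,\, c_1 c_2 p)^{T}$. Since $c_2 \neq 0$ and $p \in \textgoth{o}^{\times}$, either $y_1 = I_2$ (forcing $A_{2,1}\neq 0$) or $y_1 \neq I_2$ with $c_1 \neq 0$ (forcing $A_{3,1}\neq 0$), so $A \in \Delta_1$. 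For the $\Delta_{1,0}$ condition, Lemma \ref{lemma: Wedge 2} yields $\Lambda^2(A) = \varphi_1(y_1^{-1}) \varphi_2(y_2^{-1}) \Lambda^2(duC)$, and the upper-triangularity of $\Lambda^2(duC)$ combined with the block structure of the two $\varphi_i$'s forces $(\Lambda^2(A))_{3,1} = 0$ by a short computation. Existence of the decomposition for any $A \in \Delta_{1,0}$ is supplied by equation \eqref{equation: Existence Delta 1 0}, and Lemma \ref{lemma: Matrix Not Identity}(1) guarantees $y_2 \neq I_2$.

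For uniqueness, suppose $\varphi_2(y_1^{-1}) \varphi_1(y_2^{-1}) duC = \varphi_2(z_1^{-1}) \varphi_1(z_2^{-1}) d'u'C'$ with $y_2, z_2 \neq I_2$, and rearrange to
\[
\varphi_1(z_2)\, \varphi_2(z_1 y_1^{-1})\, \varphi_1(y_2^{-1})\, duC = d'u'C'.
\]
Since the RHS is upper triangular, the $(3,1)$-entry of the LHS must vanish. Because $\varphi_1(z_2)$ preserves the third row, a direct computation of this entry gives $-(z_1 y_1^{-1})_{21}\, c_2\, p$, so $(z_1 y_1^{-1})_{21}=0$. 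A case analysis parallel to the three cases in the $\Delta_2$ proof forces $y_1 = z_1$: if $y_1 = I_2$ then $(z_1)_{21}=0$ forces $z_1 = I_2$ by the definition of $Y(\textgoth{o})$; otherwise $z_1 = I_2$ would give $(z_1 y_1^{-1})_{21} = -c_1 \neq 0$, so both $y_1, z_1 \in Y(\textgoth{o})-\{I_2\}$ and Lemma \ref{lemma: Y Equality} applies directly. The equation then collapses to $\varphi_1(z_2 y_2^{-1}) duC = d'u'C'$, whose $(2,1)$-entry is $(z_2 y_2^{-1})_{21}\, p$; the same reasoning (via Lemma \ref{lemma: Y Equality}, now applied to $y_2, z_2 \in Y(\textgoth{o})-\{I_2\}$) yields $y_2 = z_2$. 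Finally $duC = d'u'C'$ reduces to $d = d'$, $u = u'$, $C = C'$ by the same diagonal comparison and reduction modulo $3\textgoth{o}$ used in Theorem \ref{theorem: Uniqueness Delta 2}.

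The main obstacle is that two $Y(\textgoth{o})$-factors act on overlapping two-by-two blocks, so no single entry of $A$ isolates either one. The key device is the rearranged equation above: its $(3,1)$-entry is governed entirely by the outermost $\varphi_2$ factor and therefore captures $z_1 y_1^{-1}$ alone, and only once the $\varphi_2$'s cancel via $y_1 = z_1$ does the $(2,1)$-entry of the residual $\varphi_1$-equation become available to isolate $z_2 y_2^{-1}$.
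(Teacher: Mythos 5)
Your proposal is correct and follows essentially the same route as the paper's proof: the forward inclusion via the first column and the vanishing of $(\Lambda^2(A))_{3,1}$ (the content of Lemma \ref{lemma: Matrix Not Identity}), existence via equation \eqref{equation: Existence Delta 1 0}, and uniqueness by comparing the $(3,1)$- and then $(2,1)$-entries to isolate $y_1 = z_1$ and $y_2 = z_2$ through Lemma \ref{lemma: Y Equality}, finishing with the $\Delta_2$ argument for $d$, $u$, $C$. The only cosmetic difference is that you clear both $\varphi_2(z_1)$ and $\varphi_1(z_2)$ to the left at once, whereas the paper moves only $\varphi_2(z_1)$ first; the resulting entry equations are identical.
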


\begin{proof}
First assume that $y_1, y_2 \in Y(\textgoth{o})$, $y_2 \neq I_2$, $d \in D(3)$, $u \in U(3)$ and $C \in \Gamma_{\infty}(3)$. Let

\begin{equation*}
A = (a_{ij}) = \varphi_2(y_1^{-1}) \varphi_1(y_2^{-1}) du C.
\end{equation*}
By Lemma \ref{lemma: Matrix Not Identity}, $A \not \in \Delta_{1, 1}$. Now let

\begin{equation*}
y_1 = 
\begin{pmatrix}
j_1 & k_1 \\
l_1 & m_1
\end{pmatrix} \qquad \text{and} \qquad
y_2 = 
\begin{pmatrix}
j_2 & k_2 \\
l_2 & m_2
\end{pmatrix}. 
\end{equation*}
Since $y_2 \in Y(\textgoth{o}) - \{ I_2 \}$ then $l_2 \neq 0$. The computation divides into two cases. Firstly, if $y_1 = I_2$ then $m_1 = 1$ and by direct computation, $a_{21} \neq 0$. Secondly, if $y_1 \in Y(\textgoth{o}) - \{ I_2 \}$ then $l_1 \neq 0$ and $a_{31} \neq 0$. In either case, we find that $A \in \Delta_1 - \Delta_{1, 1} = \Delta_{1, 0}$ as required. \newline

Conversely, assume that $A \in \Delta_{1, 0}$. By equation \eqref{equation: Existence Delta 1 0}, there exist $y_1, y_2 \in Y(\textgoth{o})$, $d \in D(3)$, $u \in U(3)$ and $C \in \Gamma_{\infty}(3)$ such that 

\begin{equation*}
A = \varphi_2(y_1^{-1}) \varphi_1(y_2^{-1}) du C. 
\end{equation*}
By Lemma \ref{lemma: Matrix Not Identity}, $y_2 \neq I_2$. We will now show that this decomposition is unique. Assume that there exist $z_1, z_2 \in Y(\textgoth{o})$, $d' \in D(3)$, $u' \in U(3)$ and $C' \in \Gamma_{\infty}(3)$ such that $z_2 \neq I_2$ and

\begin{equation*}
\varphi_2(y_1^{-1}) \varphi_1(y_2^{-1}) du C = A = \varphi_2(z_1^{-1}) \varphi_1(z_2^{-1}) d'u' C'. 
\end{equation*}
If $i \in \{1, 2 \}$ then let

\begin{equation*}
y_i = 
\begin{pmatrix}
j_i & k_i \\
l_i & m_i
\end{pmatrix} \qquad \text{and} \qquad 
z_i = 
\begin{pmatrix}
j_i' & k_i' \\
l_i' & m_i'
\end{pmatrix}.
\end{equation*}
By assumption, we have

\begin{equation} \label{equation: Delta 1 0 Uniqueness 1}
\varphi_2(z_1) \varphi_2(y_1^{-1}) \varphi_1(y_2^{-1}) du C = \varphi_1(z_2^{-1}) d'u' C'.
\end{equation}
Computing the $3, 1$ entry for both sides of equation \eqref{equation: Delta 1 0 Uniqueness 1} gives $l_2(l_1 m_1' - l_1' m_1) = 0$. Since $y_2 \in Y(\textgoth{o}) - \{ I_2 \}$ then $l_2 \neq 0$ and $l_1 m_1' - l_1' m_1 = 0$. In order to show that $y_1 = z_1$, there are three cases to consider --- $l_1, l_1' \neq 0$, $l_1 = 0$ and $l_1' = 0$. If $l_1, l_1' \neq 0$ then Lemma \ref{lemma: Y Equality} gives $y_1 = z_1$. In the latter two cases, we obtain $y_1 = z_1 = I_2$ from equation \eqref{equation: Delta 1 0 Uniqueness 1} and the fact that $l_2 \neq 0$. \newline

So $y_1 = z_1$ and by invertibility of $y_1$, 

\begin{equation*}
\varphi_1(z_2) \varphi_1(y_2^{-1}) d uC = d' u' C'.
\end{equation*}
By comparing the $2, 1$ entry of both sides of the equation, we find that $l_2' m_2 - l_2 m_2' = 0$. Since $y_2, z_2 \in Y(\textgoth{o}) - \{ I_2 \}$ by assumption, we can apply Lemma \ref{lemma: Y Equality} to obtain $y_2 = z_2$. So by invertibility of $y_2$, $d u C = d' u' C'$. Arguing as in the proof of Theorem \ref{theorem: Uniqueness Delta 2}, we find that $d = d'$, $u = u'$ and $C = C'$. So, the decomposition of $A \in \Delta_{1, 0}$ in equation \eqref{equation: Existence Delta 1 0} is unique and

\begin{equation*}
\Delta_{1, 0} \subseteq \bigsqcup_{\substack{y_1, y_2 \in Y(\textgoth{o}), \\ y_2 \neq I_2}} \bigsqcup_{d \in D(3)} \bigsqcup_{u \in U(3)} \varphi_2(y_1^{-1}) \varphi_1(y_2^{-1}) du \Gamma_{\infty}(3).
\end{equation*}
This completes the proof. 
\end{proof}

Finally, we show uniqueness of the decomposition in equation \eqref{equation: SL3 Existence} for elements in $\Delta_{1, 1}$. 

\begin{theorem} \label{theorem: Uniqueness Delta 1 1}
Let $\Delta_{1, 1}$ be the subset of $SL_3(\textgoth{o})$ defined in equation \eqref{equation: Delta 1 1}. We have the following equality:

\begin{equation*}
\Delta_{1, 1} = \bigsqcup_{\substack{y_1, y_2, y_3 \in Y(\textgoth{o}), \\ y_2, y_3 \neq I_2}} \bigsqcup_{d \in D(3)} \bigsqcup_{u \in U(3)} \varphi_2(y_1^{-1}) \varphi_1(y_2^{-1}) \varphi_2(y_3^{-1}) du \Gamma_{\infty}(3). 
\end{equation*}
\end{theorem}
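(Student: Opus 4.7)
The plan is to mirror the proofs of Theorems \ref{theorem: Uniqueness Delta 2} and \ref{theorem: Uniqueness Delta 1 0}, peeling off the factors $y_1$, $y_2$, $y_3$, $d$, $u$ and $C$ from left to right; the new feature is that pinning down $y_3$ is most naturally done at the level of $\Lambda^2(A)$ via Lemma \ref{lemma: Wedge 2}, in line with the fact that membership in $\Delta_{1,1}$ is itself a $\Lambda^2$-condition.

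For the forward inclusion, I take $y_1, y_2, y_3 \in Y(\textgoth{o})$ with $y_2, y_3 \neq I_2$, together with $d \in D(3)$, $u \in U(3)$ and $C \in \Gamma_{\infty}(3)$, and set $A = \varphi_2(y_1^{-1}) \varphi_1(y_2^{-1}) \varphi_2(y_3^{-1}) duC$. By Lemma \ref{lemma: Wedge 2}, $\Lambda^2(A) = \varphi_1(y_1^{-1}) \varphi_2(y_2^{-1}) \varphi_1(y_3^{-1}) \Lambda^2(duC)$, and since $duC$ is upper triangular with diagonal $(p,q,r) \in (\textgoth{o}^\times)^3$, so is $\Lambda^2(duC)$, with first column $(pq,0,0)^T$. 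Writing $y_i = \bigl(\begin{smallmatrix} j_i & k_i \\ l_i & m_i \end{smallmatrix}\bigr)$ and computing directly gives $(\Lambda^2(A))_{3,1} = l_2 l_3 pq$, which is nonzero since $y_2, y_3 \neq I_2$ forces $l_2, l_3 \neq 0$; hence $A \in \Delta_{1,1}$.

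For the reverse inclusion, Theorem \ref{theorem: SL3 Existence} supplies a decomposition $A = \varphi_2(y_1^{-1}) \varphi_1(y_2^{-1}) \varphi_2(y_3^{-1}) duC$ for any $A \in \Delta_{1,1} \subseteq \Delta_1$, and Lemma \ref{lemma: Matrix Not Identity} forces $y_2, y_3 \neq I_2$. To establish uniqueness, suppose a second decomposition with primed data $z_1, z_2, z_3, d', u', C'$ gives the same $A$. Rewriting as
\begin{equation*}
\varphi_2(z_1 y_1^{-1}) \varphi_1(y_2^{-1}) \varphi_2(y_3^{-1}) duC = \varphi_1(z_2^{-1}) \varphi_2(z_3^{-1}) d'u'C'
\end{equation*}
and noting that the first column of $\varphi_1(y_2^{-1}) \varphi_2(y_3^{-1}) duC$ is $(m_2 p, -l_2 p, 0)^T$ while the $(3,1)$-entry of the right-hand side vanishes, comparing $(3,1)$-entries gives $(l_1 m_1' - l_1' m_1) l_2 p = 0$; Lemma \ref{lemma: Y Equality} (with the degenerate cases $l_1 = 0$ or $l_1' = 0$ handled as in Theorem \ref{theorem: Uniqueness Delta 2}) then yields $y_1 = z_1$. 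Cancelling $\varphi_2(y_1^{-1})$ and multiplying by $\varphi_1(z_2)$ on the left reduces the identity to $\varphi_1(z_2 y_2^{-1}) \varphi_2(y_3^{-1}) duC = \varphi_2(z_3^{-1}) d'u'C'$, and the $(2,1)$-entry comparison produces $(l_2' m_2 - m_2' l_2) p = 0$, so $y_2 = z_2$ by Lemma \ref{lemma: Y Equality}.

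The main obstacle is the third cancellation. After removing $\varphi_1(y_2^{-1})$, the residual identity is $\varphi_2(z_3 y_3^{-1}) duC = d'u'C'$, and since $\varphi_2(z_3 y_3^{-1})$ preserves the first column $(p,0,0)^T$ of $duC$, the first column of $A$ yields only tautologies. The remedy is to apply $\Lambda^2$: by Lemma \ref{lemma: Wedge 2}, the identity becomes $\varphi_1(z_3 y_3^{-1}) \Lambda^2(duC) = \Lambda^2(d'u'C')$, and since $\Lambda^2(duC)$ is upper triangular with $(1,1)$-entry $pq \in \textgoth{o}^\times$, the $(2,1)$-entry comparison yields $(l_3' m_3 - m_3' l_3) pq = 0$, forcing $y_3 = z_3$ by Lemma \ref{lemma: Y Equality}. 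The remaining equalities $d = d'$, $u = u'$, $C = C'$ then follow verbatim from the concluding argument of Theorem \ref{theorem: Uniqueness Delta 2} applied to the residual identity $duC = d'u'C'$.
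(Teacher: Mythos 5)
Your proposal is correct and takes essentially the same route as the paper: the forward inclusion via Lemma \ref{lemma: Wedge 2}, the reverse inclusion via Theorem \ref{theorem: SL3 Existence} and Lemma \ref{lemma: Matrix Not Identity}, and the same sequential peeling of factors using entry comparisons, Lemma \ref{lemma: Y Equality} and the concluding argument of Theorem \ref{theorem: Uniqueness Delta 2}. The only (harmless) difference is bookkeeping: you pin down $y_2$ directly from the $(2,1)$ entry of the original identity and then apply $\Lambda^2$ to pin down $y_3$, whereas the paper applies $\Lambda^2$ to pin down $y_2$ (using $l_3 \neq 0$) and then obtains $y_3$, $d$, $u$, $C$ by repeating the $\Delta_2$ argument.
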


\begin{proof}
First assume that $y_1, y_2, y_3 \in Y(\textgoth{o})$, $y_2, y_3 \neq I_2$, $d \in D(3)$, $u \in U(3)$ and $C \in \Gamma_{\infty}(3)$. Let

\begin{equation*}
A = (a_{ij}) = \varphi_2(y_1^{-1}) \varphi_1(y_2^{-1}) \varphi_2(y_3^{-1}) du C.
\end{equation*}
By Lemma \ref{lemma: Wedge 2}, $\Lambda^2(A) = \varphi_1(y_1^{-1}) \varphi_2(y_2^{-1}) \varphi_1(y_3^{-1}) \Lambda^2(duC)$ and $(\Lambda^2(A))_{3, 1} = p l_2 l_3$ where $p \in \textgoth{o}^{\times}$. Since $y_2, y_3 \in Y(\textgoth{o}) - \{ I_2 \}$ then $l_2, l_3 \neq 0$ and $(\Lambda^2(A))_{3, 1} \neq 0$. Subsequently, $A \in \Delta_{1, 1}$. \newline

Conversely, assume that $A \in \Delta_{1, 1}$. By Theorem \ref{theorem: SL3 Existence}, there exist $y_1, y_2, y_3 \in \textgoth{o}$, $d \in D(3)$, $u \in U(3)$ and $C \in \Gamma_{\infty}(3)$ such that 

\begin{equation*}
A = \varphi_2(y_1^{-1}) \varphi_1(y_2^{-1}) \varphi_2(y_3^{-1}) du C.
\end{equation*}
By Lemma \ref{lemma: Matrix Not Identity}, $y_2, y_3 \in Y(\textgoth{o}) - \{ I_2 \}$. To see that the above  decomposition of $A$ is unique, assume that there exist $z_1, z_2, z_3 \in Y(\textgoth{o})$, $d' \in D(3)$, $u' \in U(3)$ and $C' \in \Gamma_{\infty}(3)$ such that $z_2, z_3 \in Y(\textgoth{o}) - \{ I_2 \}$ and

\begin{equation*}
\varphi_2(y_1^{-1}) \varphi_1(y_2^{-1}) \varphi_2(y_3^{-1}) du C = A = \varphi_2(z_1^{-1}) \varphi_1(z_2^{-1}) \varphi_2(z_3^{-1}) d'u' C'. 
\end{equation*}
If $i \in \{1, 2, 3 \}$ then let

\begin{equation*}
y_i = 
\begin{pmatrix}
j_i & k_i \\
l_i & m_i
\end{pmatrix} \qquad \text{and} \qquad 
z_i = 
\begin{pmatrix}
j_i' & k_i' \\
l_i' & m_i'
\end{pmatrix}.
\end{equation*}
Then we have

\begin{equation} \label{equation: Delta 1 1 Uniqueness 1}
\varphi_2(z_1) \varphi_2(y_1^{-1}) \varphi_1(y_2^{-1}) \varphi_2(y_3^{-1}) du C = \varphi_1(z_2^{-1}) \varphi_2(z_3^{-1}) d'u' C'
\end{equation}
and by computing the $3, 1$ entry, we obtain the equation $l_2(l_1 m_1' - l_1' m_1) = 0$. Since $y_2 \in Y(\textgoth{o}) - \{ I_2 \}$ then $l_2 \neq 0$ and $l_1 m_1' - l_1' m_1$. As in the proofs of Theorem \ref{theorem: Uniqueness Delta 2} and Theorem \ref{theorem: Uniqueness Delta 1 0}, there are three cases to consider --- $l_1, l_1' \neq 0$, $l_1 = 0$ and $l_1' = 0$. In the first case, we apply Lemma \ref{lemma: Y Equality} to obtain $y_1 = z_1$. In the latter two cases, we obtain $y_1 = I_2 = z_1$ from $l_2 \neq 0$ and equation \eqref{equation: Delta 1 1 Uniqueness 1}. \newline

So $y_1 = z_1$. By invertibility of $y_1$, we now have

\begin{equation*}
\varphi_1(y_2^{-1}) \varphi_2(y_3^{-1}) du C = \varphi_1(z_2^{-1}) \varphi_2(z_3^{-1}) d'u' C'.
\end{equation*}
By applying $\Lambda^2$ to both sides and then multiplying on the left by $\varphi_2(z_2)$, we obtain from Lemma \ref{lemma: Wedge 2}

\begin{equation*}
\varphi_2(z_2) \varphi_2(y_2^{-1}) \varphi_1(y_3^{-1}) \Lambda^2(du C) = \varphi_1(z_3^{-1}) \Lambda^2(d'u' C').
\end{equation*}
Again by computing the $3, 1$ entry of both sides, we obtain $l_3(l_2 m_2' - l_2' m_2) = 0$. Since $y_3 \in Y(\textgoth{o}) - \{ I_2 \}$ then $l_3 \neq 0$ and $l_2 m_2' - l_2' m_2 = 0$. By Lemma \ref{lemma: Y Equality}, $y_2 = z_2$ and

\begin{equation*}
\varphi_2(y_3^{-1}) du C = \varphi_2(z_3^{-1}) d'u' C'.
\end{equation*}
By repeating the argument in the proof of Theorem \ref{theorem: Uniqueness Delta 2}, we deduce that $y_3 = z_3$, $d = d'$, $u = u'$ and $C = C'$. Thus, the decomposition of $A \in \Delta_{1, 1}$ is unique and

\begin{equation*}
\Delta_{1, 1} \subseteq \bigsqcup_{\substack{y_1, y_2, y_3 \in Y(\textgoth{o}), \\ y_2, y_3 \neq I_2}} \bigsqcup_{d \in D(3)} \bigsqcup_{u \in U(3)} \varphi_2(y_1^{-1}) \varphi_1(y_2^{-1}) \varphi_2(y_3^{-1}) du \Gamma_{\infty}(3). 
\end{equation*}
This completes the proof. 
\end{proof}

By combining Theorem \ref{theorem: Uniqueness Delta 2}, Theorem \ref{theorem: Uniqueness Delta 1 0}, Theorem \ref{theorem: Uniqueness Delta 1 1} and equation \eqref{equation: SL3 Disjoint Decomp}, we obtain a ``Bruhat-like" decomposition of $SL_3(\textgoth{o})$.

\begin{corollary} \label{corollary: Bruhat Right}
The set of matrices $SL_3(\textgoth{o})$ is the disjoint union of the three cell decompositions

\begin{equation*}
\Delta_2 = \bigsqcup_{y \in Y(\textgoth{o})} \bigsqcup_{d \in D(3)} \bigsqcup_{u \in U(3)} \varphi_2(y^{-1}) du \Gamma_{\infty}(3),
\end{equation*}

\begin{equation*}
\Delta_{1, 0} = \bigsqcup_{\substack{y_1, y_2 \in Y(\textgoth{o}), \\ y_2 \neq I_2}} \bigsqcup_{d \in D(3)} \bigsqcup_{u \in U(3)} \varphi_2(y_1^{-1}) \varphi_1(y_2^{-1}) du \Gamma_{\infty}(3), 
\end{equation*}
and
\begin{equation*}
\Delta_{1, 1} = \bigsqcup_{\substack{y_1, y_2, y_3 \in Y(\textgoth{o}), \\ y_2, y_3 \neq I_2}} \bigsqcup_{d \in D(3)} \bigsqcup_{u \in U(3)} \varphi_2(y_1^{-1}) \varphi_1(y_2^{-1}) \varphi_2(y_3^{-1}) du \Gamma_{\infty}(3). 
\end{equation*}
\end{corollary}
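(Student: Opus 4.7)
The plan is to assemble this statement directly from results already established, as the corollary is a packaging of Theorem \ref{theorem: Uniqueness Delta 2}, Theorem \ref{theorem: Uniqueness Delta 1 0}, and Theorem \ref{theorem: Uniqueness Delta 1 1}, together with the trichotomy recorded in equation \eqref{equation: SL3 Disjoint Decomp}.

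First I would invoke equation \eqref{equation: SL3 Disjoint Decomp}, which gives the disjoint decomposition
\begin{equation*}
SL_3(\textgoth{o}) = \Delta_{1, 0} \sqcup \Delta_{1, 1} \sqcup \Delta_2.
\end{equation*}
The disjointness between $\Delta_2$ and $\Delta_1 = \Delta_{1,0} \sqcup \Delta_{1,1}$ comes from the vanishing versus non-vanishing of the entries $a_{21}, a_{31}$ (see equations \eqref{equation: Delta 1} and \eqref{equation: Delta 2}), while the split between $\Delta_{1,0}$ and $\Delta_{1,1}$ comes from the vanishing versus non-vanishing of the $(3,1)$ entry of $\Lambda^2(A)$ (see equations \eqref{equation: Delta 1 0} and \eqref{equation: Delta 1 1}).

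Next, I would simply cite the three uniqueness theorems, which supply the three displayed cell decompositions for $\Delta_2$, $\Delta_{1, 0}$, and $\Delta_{1, 1}$ respectively, each as an interior disjoint union indexed by the appropriate data in $Y(\textgoth{o})$, $D(3)$, and $U(3)$. Combining these with the trichotomy above yields the claim: the within-cell disjointness comes from each uniqueness theorem, and the between-cell disjointness comes from equation \eqref{equation: SL3 Disjoint Decomp}.

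There is no real obstacle here, since all substantive work, namely producing the decomposition via Steinberg reduction and proving uniqueness using Lemma \ref{lemma: Matrix Not Identity} and Lemma \ref{lemma: Y Equality}, has already been carried out. The only thing worth flagging explicitly in the write-up is that the constraint $y_2 \neq I_2$ (respectively $y_2, y_3 \neq I_2$) appearing in the index sets for the $\Delta_{1,0}$ and $\Delta_{1,1}$ decompositions is precisely what ensures these cells do not overlap with the $\Delta_2$ cell, thereby justifying calling the resulting union a ``Bruhat-like'' decomposition of $SL_3(\textgoth{o})$.
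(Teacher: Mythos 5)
Your proposal is correct and matches the paper exactly: the corollary is obtained by combining Theorem \ref{theorem: Uniqueness Delta 2}, Theorem \ref{theorem: Uniqueness Delta 1 0} and Theorem \ref{theorem: Uniqueness Delta 1 1} with the disjoint decomposition in equation \eqref{equation: SL3 Disjoint Decomp}, which is precisely how the paper presents it (as an immediate consequence with no separate proof). Your added remark about the conditions $y_2 \neq I_2$ and $y_2, y_3 \neq I_2$ guaranteeing between-cell disjointness is a reasonable clarification consistent with Lemma \ref{lemma: Matrix Not Identity}.
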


\section{Constructing a representative of each orbit in $\Gamma_{\infty}(3) \backslash \Gamma(3)$}

\subsection{The form of a representative of each orbit in $\Gamma_{\infty}(3) \backslash \Gamma(3)$}

If $A \in \Gamma(3)$ then one can apply Corollary \ref{corollary: Bruhat Right} to obtain a representative of the right orbit $A \cdot \Gamma_{\infty}(3)$. By using equation \eqref{equation: Upsilon Multiplication}, we can find a representative of the left orbit $\Gamma_{\infty}(3) \cdot A$. 

\begin{theorem} \label{theorem: Bruhat Left 3 Specific}
The set of matrices $SL_3(\textgoth{o})$ is the disjoint union of the three cell decompositions

\begin{equation} \label{equation: Left Delta 2}
\bigsqcup_{y \in Y(\textgoth{o})} \bigsqcup_{d \in D(3)} \bigsqcup_{u \in U(3)} \Gamma_{\infty}(3) u d \varphi_1(y),
\end{equation}

\begin{equation} \label{equation: Left Delta 1 0}
\bigsqcup_{\substack{y_1, y_2 \in Y(\textgoth{o}), \\ y_2 \neq I_2}} \bigsqcup_{d \in D(3)} \bigsqcup_{u \in U(3)}  \Gamma_{\infty}(3) u d \varphi_2(y_2) \varphi_1(y_1), 
\end{equation}
and
\begin{equation} \label{equation: Left Delta 1 1}
\bigsqcup_{\substack{y_1, y_2, y_3 \in Y(\textgoth{o}), \\ y_2, y_3 \neq I_2}} \bigsqcup_{d \in D(3)} \bigsqcup_{u \in U(3)} \Gamma_{\infty}(3) u d \varphi_1(y_3) \varphi_2(y_2) \varphi_1(y_1). 
\end{equation}
\end{theorem}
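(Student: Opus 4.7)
The strategy is to apply the anti-multiplicative map $\Upsilon^1$ to the right-coset decomposition of Corollary \ref{corollary: Bruhat Right} and re-parameterize the resulting cells. Since equation \eqref{equation: Adjugate} gives $\Upsilon^1(A) = \Lambda^2(A)^{-1}$ for every $A \in SL_3(\textgoth{o})$, and a direct computation from the formula \eqref{equation: Upsilon} shows $\Upsilon^1 \circ \Upsilon^1 = \mathrm{id}$ on $M_{3 \times 3}(\textgoth{o})$, the map $\Upsilon^1$ is an involutive bijection $SL_3(\textgoth{o}) \to SL_3(\textgoth{o})$.

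First I would establish how $\Upsilon^1$ acts on the building blocks appearing in Corollary \ref{corollary: Bruhat Right}. Combining Lemma \ref{lemma: Wedge 2} with $\Upsilon^1(A) = \Lambda^2(A)^{-1}$ gives $\Upsilon^1(\varphi_1(y)) = \varphi_2(y^{-1})$ and $\Upsilon^1(\varphi_2(y)) = \varphi_1(y^{-1})$. A direct inspection of \eqref{equation: Upsilon} shows that $\Upsilon^1$ sends $\operatorname{diag}(i, j, k) \in D(3)$ to $\operatorname{diag}(k, j, i) \in D(3)$, and sends the upper triangular unipotent matrix with entries $\alpha, \beta, \gamma$ in positions $(1,2), (1,3), (2,3)$ to the upper triangular unipotent matrix with corresponding entries $-\gamma, \beta, -\alpha$. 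In particular $\Upsilon^1$ restricts to a bijection on $\Gamma_{\infty}(3)$, because divisibility by $3$ is preserved under negation.

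Applying $\Upsilon^1$ cell-by-cell to the three decompositions of Corollary \ref{corollary: Bruhat Right} and using the anti-multiplicative property \eqref{equation: Upsilon Multiplication} to reverse the order of factors, a typical cell $\varphi_2(y_1^{-1}) \varphi_1(y_2^{-1}) \varphi_2(y_3^{-1}) du \Gamma_{\infty}(3)$ from $\Delta_{1,1}$ maps to $\Gamma_{\infty}(3) \Upsilon^1(u) \Upsilon^1(d) \varphi_1(y_3) \varphi_2(y_2) \varphi_1(y_1)$. Using the reduction map \eqref{equation: Mod 3 Map}, I would write $\Upsilon^1(u) = c u'$ uniquely with $c \in \Gamma_{\infty}(3)$ and $u' \in U(3)$, so that the cell becomes $\Gamma_{\infty}(3) u' d' \varphi_1(y_3) \varphi_2(y_2) \varphi_1(y_1)$ where $d' = \Upsilon^1(d) \in D(3)$; the computations for $\Delta_{1,0}$ and $\Delta_2$ are analogous and produce the inner factors $u d \varphi_2(y_2) \varphi_1(y_1)$ and $u d \varphi_1(y)$ demanded by \eqref{equation: Left Delta 1 0} and \eqref{equation: Left Delta 2}.

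Finally, I would verify that the index data is reshuffled bijectively, so that the disjoint union structure is preserved. The map $d \mapsto \Upsilon^1(d)$ is clearly a bijection $D(3) \to D(3)$, and the map $u \mapsto u'$ sends $(\alpha, \beta, \gamma) \mapsto ((-\gamma)_3, \beta, (-\alpha)_3)$, which is a bijection $U(3) \to U(3)$ because negation modulo $3$ permutes $\{0, 1, 2\}$. Consequently, as $(y_1, y_2, y_3, d, u)$ ranges over the indexing set of Corollary \ref{corollary: Bruhat Right}, the re-parameterized tuple ranges over the same indexing set, recovering \eqref{equation: Left Delta 1 1}, with the analogous conclusions for the other two cells. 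Disjointness of the three unions is immediate because $\Upsilon^1$ is a bijection, and together they cover $SL_3(\textgoth{o}) = \Upsilon^1(SL_3(\textgoth{o}))$. The main technical content is the bundle of identities on $\Upsilon^1$ in the first two paragraphs; once those are in place the rest reduces to symbolic bookkeeping.
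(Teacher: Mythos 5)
Your proposal is correct and follows essentially the same route as the paper: apply the involution $\Upsilon^1$ to the right-coset cell decomposition of Corollary \ref{corollary: Bruhat Right}, reverse the factors via the anti-multiplicativity in equation \eqref{equation: Upsilon Multiplication}, use $\Upsilon^1(\varphi_2(y^{-1})) = \varphi_1(y)$, $\Upsilon^1(D(3)) = D(3)$, $\Upsilon^1(\Gamma_{\infty}(3)) = \Gamma_{\infty}(3)$, and split $\Upsilon^1(u)$ into a $\Gamma_{\infty}(3)$ factor times a $U(3)$ factor via the map $(-)_3$. The only difference is that you spell out the re-indexing bijections on $D(3)$ and $U(3)$ and the resulting disjointness, which the paper leaves implicit.
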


\begin{proof}
Assume that 

\begin{equation*}
A = 
\begin{pmatrix}
a & b & c \\
d & e & f \\
g & h & i 
\end{pmatrix} \in SL_3(\textgoth{o}).
\end{equation*}
By equation \eqref{equation: Upsilon}, $A = \Upsilon^1(\Upsilon^1(A))$. Using Theorem \ref{theorem: SL3 Existence}, 

\begin{equation*}
\Upsilon^1(A) = \varphi_2(y_1^{-1}) \varphi_1(y_2^{-1}) \varphi_2(y_3^{-1}) d u C
\end{equation*}
where $y_1, y_2, y_3 \in Y(\textgoth{o})$, $d \in D(3)$, $u \in U(3)$ and $C \in \Gamma_{\infty}(3)$. Applying equation \eqref{equation: Upsilon Multiplication}, we find that 

\begin{equation*}
A = \Upsilon^1(\Upsilon^1(A)) = \Upsilon^1(C) \Upsilon^1(u) \Upsilon^1(d) \varphi_1(y_3) \varphi_2(y_2) \varphi_1(y_1)
\end{equation*}
where $\Upsilon^1(C) \in \Gamma_{\infty}(3)$ and $\Upsilon^1(d) \in D(3)$. Now assume that 

\begin{equation*}
u = 
\begin{pmatrix} 
1 & x & y \\
& 1 & z \\
& & 1 
\end{pmatrix} \in U(3) \qquad \text{so that} \qquad 
\Upsilon^1(u) = 
\begin{pmatrix} 
1 & -z & y \\
& 1 & -x \\
& & 1 
\end{pmatrix}.
\end{equation*}
Recall the map $(-)_3$ from equation \eqref{equation: Mod 3 Map}. Choose unique elements $m, n, p \in \textgoth{o}$ such that 

\begin{equation*}
-z - 3m = (-z)_3, \qquad y - 3n = (y)_3 \qquad \text{and} \qquad -x - 3p = (x)_3. 
\end{equation*} 
Then, $\Upsilon^1(u) = KL$ where

\begin{equation*}
K = 
\begin{pmatrix}
1 & 3m & 3mx + 9mp + 3n \\
& 1 & 3p \\
& & 1
\end{pmatrix} \in \Gamma_{\infty}(3) \qquad \text{and} \qquad L =
\begin{pmatrix}
1 & -z - 3m & y - 3n \\
& 1 & -x - 3p \\
& & 1
\end{pmatrix} \in U(3).
\end{equation*}
Hence, 

\begin{equation*}
A = \Upsilon^1(\Upsilon^1(A)) = (\Upsilon^1(C) K) L \Upsilon^1(d) \varphi_1(y_3) \varphi_2(y_2) \varphi_1(y_1).
\end{equation*}
By Corollary \ref{corollary: Bruhat Right} and the fact that the map $\Upsilon^1: M_{3 \times 3}(\textgoth{o}) \rightarrow M_{3 \times 3}(\textgoth{o})$ is bijective, we have

\begin{equation*}
SL_3(\textgoth{o}) = \Upsilon^1(SL_3(\textgoth{o})) = \Upsilon^1(\Delta_2) \sqcup \Upsilon^1(\Delta_{1, 0}) \sqcup \Upsilon^1(\Delta_{1, 1}) .
\end{equation*}
Thus by specialising the above construction to the disjoint subsets $\Delta_2, \Delta_{1, 0}$ and $\Delta_{1, 1}$ of $SL_3(\textgoth{o})$, we obtain the desired cell decomposition of $SL_3(\textgoth{o})$. 
\end{proof}

\begin{remark}
\emph{We can use the inverse $A^{-1}$ in place of $\Upsilon^1(A)$ to prove an analogous result to Theorem \ref{theorem: Bruhat Left 3 Specific}. One reason why $\Upsilon^1(A)$ was used in the above proof is because by equation \eqref{equation: Upsilon}, $\Upsilon^1(A)$ is much easier to compute than $A^{-1}$.} \newline

\emph{The proof of Theorem \ref{theorem: Bruhat Left 3 Specific} relies on decomposing $\Upsilon^1(A)$ by using Theorem \ref{theorem: SL3 Existence}. In the right orbit $\Upsilon^1(A) \cdot \Gamma_{\infty}(3)$, the invariants are given by the first columns of $\Upsilon^1(A)$ and $\Lambda^2(\Upsilon^1(A))$, which are, by equation \eqref{equation: Upsilon},}

\begin{equation*}
[i, -h, g]^T \qquad \text{and} \qquad [ei - fh, fg - di, dh - eg]^T
\end{equation*}
\emph{respectively. The elements of both columns are the $\Lambda^1$ and $\Lambda^2$ invariants of $A$ up to a few signs. This strong connection to the $\Lambda^1$ and $\Lambda^2$ invariants of $A$ is another reason why we used $\Upsilon^1(A)$ in the proof of Theorem \ref{theorem: Bruhat Left 3 Specific} rather than $A^{-1}$.}
\end{remark}

\begin{example}
As an example illustrating Theorem \ref{theorem: Bruhat Left 3 Specific}, let 

\begin{equation*}
A = 
\begin{pmatrix}
4 & -3 & -12 \\
-3 & 4 & 15 \\
-6 & 3 & 13 \\
\end{pmatrix} \in \Gamma(3). 
\end{equation*}
By following the steps outlined in the proof of Theorem \ref{theorem: Bruhat Left 3 Specific}, $A = C \cdot u \cdot d \cdot \varphi_1(y_3) \cdot \varphi_2(y_2) \cdot \varphi_1(y_1)$, where

\begin{equation*}
y_1 = 
\begin{pmatrix}
1 & -1 \\
2 & -1
\end{pmatrix}, \qquad
y_2 = 
\begin{pmatrix}
2 & -9 \\
3 & -13
\end{pmatrix}, \qquad
y_3 = 
\begin{pmatrix}
2 & 23 \\
5 & 58
\end{pmatrix} \in Y(\textgoth{o}),
\end{equation*}

\begin{equation*}
d = \begin{pmatrix}
1 & & \\
& - 1 & \\
& & -1 \\
\end{pmatrix} \in D(3), \qquad u = 
\begin{pmatrix}
1 & & \\
& 1 & \\
& & 1 \\
\end{pmatrix} \in U(3) 
\end{equation*}

\begin{equation*}
\text{and} \qquad
C = 
\begin{pmatrix}
1 & 0 & 15 \\
& 1 & -39 \\
& & 1 \\
\end{pmatrix} \in \Gamma_{\infty}(3). 
\end{equation*}
Consequently, $u \cdot d \cdot \varphi_1(y_3) \cdot \varphi_2(y_2) \cdot \varphi_1(y_1)$ is a representative of the left orbit $\Gamma_{\infty}(3) \cdot A$ belonging to the cell decomposition in equation \eqref{equation: Left Delta 1 1}.
\end{example}

\subsection{Proof of the main theorem}

This section is dedicated to the statement and proof of the main theorem, which begins with a sequence $S = (A_1, B_1, C_1, A_2, B_2, C_2) \in \textgoth{o}^6$ satisfying  invariant conditions \eqref{equation: I1}, \eqref{equation: I2}, \eqref{equation: I3},  \eqref{equation: I4} and produces a matrix $A \in \Gamma(3)$ such that $Inv(A) = S$. The matrix $A$ decomposes according to Theorem \ref{theorem: Bruhat Left 3 Specific} and in turn, is a matrix representative of the orbit in $\Gamma_{\infty}(3) \backslash \Gamma(3)$ whose invariants are given by $S$. To reiterate, there are five different cases to consider, which are outlined in \eqref{equation: Cases}. 

\begin{theorem} \label{theorem: Bruhat as Invariants + Cases}
Let $(A_1, B_1, C_1, A_2, B_2, C_2) \in \textgoth{o}^6$ satisfy the invariant conditions \eqref{equation: I1}, \eqref{equation: I2}, \eqref{equation: I3} and \eqref{equation: I4}. \newline

Case 1: If $A_1 \neq 0$, $A_2 \neq 0$ and $i \in \{1, 2, 3 \}$ then define $p_i, q_i, s_i \in \textgoth{o}$, $r_i \in \textgoth{o}/\textgoth{o}^{\times}$ and $\alpha, \beta, \gamma \in \textgoth{o}^{\times}$ such that 

\begin{equation} \label{equation: Case 1}
\begin{matrix}
A_1 = \gcd(A_1, B_1) r_1, & & p_1 \in \textgoth{o}/r_1 \textgoth{o}, & & \gcd(A_1, B_1) s_1 = B_1, & & p_1 s_1 - q_1 r_1 = 1, \\
\\
\gcd(A_1, B_1) = \alpha r_2, & & p_2 \in \textgoth{o}/r_2 \textgoth{o}, & & s_2 = \alpha^{-1} C_1, & & p_2 s_2 - q_2 s_2 = 1, \\
\\
A_2 = \gcd(A_1, B_1) \beta r_3, & & p_3 \in \textgoth{o}/r_3 \textgoth{o}, & & s_3 = \gamma(p_1 C_2 - q_1 B_2), & & p_3 s_3 - q_3 r_3 = 1, \\
\\
\alpha \beta \gamma = 1.
\end{matrix}
\end{equation}

Case 2: If $A_1 \neq 0$, $A_2 = 0$ and $i \in \{1, 2, 3 \}$ then define $p_i, q_i, s_i \in \textgoth{o}$, $r_i \in \textgoth{o}/\textgoth{o}^{\times}$ and $\alpha, \beta, \gamma \in \textgoth{o}^{\times}$ such that 

\begin{equation} \label{equation: Case 2}
\begin{matrix}
A_1 = \gcd(A_1, B_1) r_1, & & p_1 \in \textgoth{o}/r_1 \textgoth{o}, & & \gcd(A_1, B_1) s_1 = B_1, & & p_1 s_1 - q_1 r_1 = 1, \\
\\
\gcd(A_1, B_1) = \alpha r_2, & & p_2 \in \textgoth{o}/r_2 \textgoth{o}, & & s_2 = \alpha^{-1} C_1, & & p_2 s_2 - q_2 s_2 = 1, \\
\\
r_3 = 0, & & p_3 = 1, & & s_3 = 1, & & q_3 = 0, \\
\\
p_1 C_2 - q_1 B_2 = \alpha \beta, & & \alpha \beta \gamma = 1.
\end{matrix}
\end{equation}

Case 3: If $A_1 = 0$, $A_2 \neq 0$, $B_1 \neq 0$ and $i \in \{1, 2, 3 \}$ then define $p_i, q_i, s_i \in \textgoth{o}$, $r_i \in \textgoth{o}/\textgoth{o}^{\times}$ and $\alpha, \beta, \gamma \in \textgoth{o}^{\times}$ such that 

\begin{equation} \label{equation: Case 3}
\begin{matrix}
r_1 = 0, & & p_1 = 1, & & s_1 = 1, & & q_1 = 0, \\
\\
B_1 = \alpha r_2, & & p_2 \in \textgoth{o}/r_2 \textgoth{o}, & & s_2 = \alpha^{-1} C_1, & & p_2 s_2 - q_2 s_2 = 1, \\
\\
A_2 = \beta B_1 r_3, & & p_3 \in \textgoth{o}/r_3 \textgoth{o}, & & s_3 = \gamma C_2, & & p_3 s_3 - q_3 r_3 = 1, \\
\\
\alpha \beta \gamma = 1.
\end{matrix}
\end{equation}

Case 4: If $A_1 = A_2 = 0$, $B_1 \neq 0$ and $i \in \{1, 2, 3 \}$ then define $p_i, q_i, s_i \in \textgoth{o}$, $r_i \in \textgoth{o}/\textgoth{o}^{\times}$ and $\alpha, \beta, \gamma \in \textgoth{o}^{\times}$ such that 

\begin{equation} \label{equation: Case 4}
\begin{matrix}
r_1 = 0, & & p_1 = 1, & & s_1 = 1, & & q_1 = 0, \\
\\
B_1 = \alpha r_2, & & p_2 \in \textgoth{o}/r_2 \textgoth{o}, & & s_2 = \beta C_1, & & p_2 s_2 - q_2 s_2 = 1, \\
\\
r_3 = 0, & & p_3 = 1, & & s_3 = 1, & & q_3 = 0, \\
\\
\alpha \beta = 1, & & \gamma = 1.
\end{matrix}
\end{equation}
In each of these cases, define the matrices

\begin{equation} \label{equation: D And Y}
d = 
\begin{pmatrix}
\gamma & & \\
& \beta & \\
& & \alpha
\end{pmatrix}, \; y_1 =
\begin{pmatrix}
p_1 & q_1 \\
r_1 & s_1
\end{pmatrix}, \; y_2 = 
\begin{pmatrix}
p_2 & q_2 \\
r_2 & s_2
\end{pmatrix}, \; y_3 = 
\begin{pmatrix}
p_3 & q_3 \\
r_3 & s_3
\end{pmatrix}.
\end{equation}
Next, define

\begin{equation} \label{equation: W And V}
W = d \cdot \varphi_1(y_3) \cdot \varphi_2(y_2) \cdot \varphi_1(y_1), \qquad W^{-1} = (z_{ij}), \qquad  V = \big( (z_{ij})_3 \big)
\end{equation}

where in the definition of $V$, we recall the map $(-)_3$ from equation \eqref{equation: Mod 3 Map}. Then, the matrix

\begin{equation*}
X = VW \in \Gamma(3) \qquad \text{and satisfies} \qquad Inv(X) = (A_1, B_1, C_1, A_2, B_2, C_2).
\end{equation*}
Moreover, the decomposition $X = I_3 \cdot V \cdot d \cdot \varphi_1(y_3) \cdot \varphi_2(y_2) \cdot \varphi_1(y_1)$ with the identity matrix $I_3 \in \Gamma_{\infty}(3)$ is the decomposition from Theorem \ref{theorem: Bruhat Left 3 Specific}. \newline

Case 5: If $A_1 = B_1 = A_2 = 0$ then the matrix

\begin{equation} \label{equation: Corollary X}
X = 
\varphi_1
\begin{pmatrix}
a - b B_2  & b - b C_2  \\
B_2 & C_2 \\
\end{pmatrix} \qquad \text{with} \qquad a C_2 - b B_2 = 1
\end{equation}
is an element of $\Gamma(3)$ which satisfies $Inv(X) = (A_1, B_1, C_1, A_2, B_2, C_2)$.
\end{theorem}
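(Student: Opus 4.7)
My plan is to reduce Theorem~\ref{theorem: Bruhat as Invariants + Cases} in Cases 1--4 to three assertions: that each $y_i$ lies in $Y(\textgoth{o})$, that $Inv(W) = (A_1, B_1, C_1, A_2, B_2, C_2)$, and that the prescribed matrix $V$ actually lies in $U(3)$. Given these, $V \equiv W^{-1} \pmod{3\textgoth{o}}$ entry by entry by the very definition of $V$, so $X = VW \equiv W^{-1}W = I_3 \pmod{3\textgoth{o}}$, placing $X$ in $\Gamma(3)$. Since $V$ and (by Lemma~\ref{lemma: Wedge 2}) $\Lambda^2(V)$ are both upper triangular unipotent, their bottom rows are $(0,0,1)$; consequently the bottom rows of $X$ and of $\Lambda^2(X) = \Lambda^2(V) \Lambda^2(W)$ coincide with those of $W$ and $\Lambda^2(W)$, yielding $Inv(X) = Inv(W)$. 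The form $X = I_3 \cdot V \cdot d \cdot \varphi_1(y_3) \varphi_2(y_2) \varphi_1(y_1)$ is then precisely the canonical decomposition given by Theorem~\ref{theorem: Bruhat Left 3 Specific}.

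The bulk of the work is computing $Inv(W)$. First I would expand $W$ explicitly and read off its third row, and then, using Lemma~\ref{lemma: Wedge 2}, I would expand $\Lambda^2(W) = \Lambda^2(d) \cdot \varphi_2(y_3) \cdot \varphi_1(y_2) \cdot \varphi_2(y_1)$ and read off its third row as well. The definitions of $r_i, s_i, \alpha, \beta, \gamma$ in \eqref{equation: Case 1}--\eqref{equation: Case 4} are engineered precisely so that these two bottom rows are $(A_1, B_1, C_1)$ and $(A_2, B_2, C_2)$; the verification is a routine (if lengthy) matrix multiplication. Checking that each $y_i \in Y(\textgoth{o})$ reduces to verifying $\gcd(r_i, s_i) = 1$: for $i = 1, 2$ this follows directly from \eqref{equation: I3}, while for $i = 3$ in Case 1, with $s_3 = \gamma(p_1 C_2 - q_1 B_2)$ and $r_3$ a unit multiple of $A_2 / \gcd(A_1, B_1)$, coprimality is the delicate point and follows from combining \eqref{equation: I3} with the invariant condition \eqref{equation: I4}, $A_1 C_2 - B_1 B_2 + C_1 A_2 = 0$. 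This is the main obstacle in the proof. Once $Inv(W)$ is confirmed, the claim $V \in U(3)$ reduces via $W^{-1} = \Upsilon^1(W)$ (valid because $\det W = 1$) to the congruences $W_{11}, W_{22}, W_{33} \equiv 1$ and $W_{21}, W_{31}, W_{32} \equiv 0 \pmod{3\textgoth{o}}$; the entries $W_{31} = A_1$, $W_{32} = B_1$, $W_{33} = C_1$ are handled by \eqref{equation: I1}--\eqref{equation: I2}, and the remaining three congruences follow from reducing the identities $B_2 = W_{21} W_{33} - W_{23} W_{31}$, $C_2 = W_{22} W_{33} - W_{23} W_{32}$, and $\det(W) = 1$ modulo $3\textgoth{o}$.

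Case 5 I would dispatch by direct verification. The hypotheses $A_1 = B_1 = 0$ and \eqref{equation: I3} force $C_1 \in \textgoth{o}^\times$, and combined with $C_1 \equiv 1 \pmod{3\textgoth{o}}$ from \eqref{equation: I2} and the explicit unit group $\textgoth{o}^\times = \{\pm 1, \pm \omega, \pm \omega^2\}$, this forces $C_1 = 1$. The condition $A_2 = 0$ together with \eqref{equation: I3} gives $\gcd(B_2, C_2) = 1$, supplying $a, b \in \textgoth{o}$ with $aC_2 - bB_2 = 1$. A direct calculation using \eqref{equation: Second Wedge Product} then gives $Inv(X) = (0, 0, 1, 0, B_2, C_2)$, and reducing the entries of $X$ modulo $3\textgoth{o}$ using $B_2 \equiv 0$ and $C_2 \equiv 1$ yields $X - I_3 \equiv 0 \pmod{3\textgoth{o}}$, so $X \in \Gamma(3)$.
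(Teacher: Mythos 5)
Your plan follows the paper's proof in its overall architecture: check $y_i \in Y(\textgoth{o})$, compute $Inv(W)$, show $V \in U(3)$, and conclude $X = VW \equiv W^{-1}W \equiv I_3$ mod $3\textgoth{o}$ with $Inv(X) = Inv(W)$ because $V$ and $\Lambda^2(V)$ have bottom row $(0,0,1)$; Case 5 is handled exactly as in the paper. One piece is genuinely different, and cleaner: you derive the congruences $W_{21}, W_{31}, W_{32} \equiv 0$ and $W_{11}, W_{22}, W_{33} \equiv 1$ mod $3\textgoth{o}$ directly from $Inv(W) = (A_1, \dots, C_2)$, conditions \eqref{equation: I1}, \eqref{equation: I2} and $\det W = 1$ (via $B_2 \equiv W_{21}$, $C_2 \equiv W_{22}$ and $1 = \det W \equiv W_{11}$ once the bottom row is known), whereas the paper grinds this out entry by entry through the auxiliary congruences \eqref{equation: Mod 1}--\eqref{equation: Mod 5}. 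One slip: the identity $W^{-1} = \Upsilon^1(W)$ is false (test it on a diagonal $W$); by \eqref{equation: Adjugate} what holds is $W^{-1} = \Upsilon^1(\Lambda^2(W))$. This does not damage your argument, since no formula is needed at all: reduction mod $3\textgoth{o}$ is a ring homomorphism, so if $W$ is congruent to an upper triangular unipotent matrix then so is $W^{-1}$, and your list of congruences on $W$ therefore suffices.

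The weakness is that the two places you defer are precisely where the paper's real work lies. First, before $r_3$ in \eqref{equation: Case 1} can even be defined you must show $\gcd(A_1,B_1)$ divides $A_2$; this follows from \eqref{equation: I3} and \eqref{equation: I4} since $C_1 A_2 = B_1 B_2 - A_1 C_2 \in \gcd(A_1,B_1)\textgoth{o}$ and $\gcd(\gcd(A_1,B_1), C_1) = 1$, but your sketch never mentions it. Second, "coprimality of $r_3, s_3$ follows from \eqref{equation: I3} and \eqref{equation: I4}" is an assertion, not an argument: the mechanism is to divide \eqref{equation: I4} by $\gcd(A_1,B_1) \neq 0$, combine with $p_1 s_1 - q_1 r_1 = 1$ and the definition of $s_3$ to obtain $B_2 = \alpha\beta(p_1 r_3 s_2 + r_1 s_3)$ and $C_2 = \alpha\beta(q_1 r_3 s_2 + s_1 s_3)$ (the paper's \eqref{equation: B2} and \eqref{equation: C2}), and then substitute into a B\'ezout relation $\delta_1 A_2 + \delta_2 B_2 + \delta_3 C_2 = 1$ supplied by \eqref{equation: I3} to land in $r_3\textgoth{o} + s_3\textgoth{o}$. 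Relatedly, the claim that $Inv(W) = (A_1,\dots,C_2)$ is "routine matrix multiplication" understates it: the bottom row of $\Lambda^2(W)$ equals $(A_2, B_2, C_2)$ only after invoking \eqref{equation: I4} and cancelling $\gcd(A_1,B_1) \neq 0$ (the same two identities as above do the job). With these steps filled in, your proposal is essentially the paper's proof, improved at the $V \in U(3)$ stage.
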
 

\begin{proof}
We will first prove case 1. Assume that the sequence $(A_1, B_1, C_1, A_2, B_2, C_2) \in \textgoth{o}^6$ satisfies the invariant conditions \eqref{equation: I1}, \eqref{equation: I2}, \eqref{equation: I3} and \eqref{equation: I4}. Assume that $A_1 \neq 0$ and $A_2 \neq 0$. Then, $\gcd(A_1, B_1) \neq 0$ which ensures that we can select $r_1, r_2 \in \textgoth{o} - \{ 0 \}/\textgoth{o}^{\times}$ and $\alpha \in \textgoth{o}^{\times}$ satisfying the equations

\begin{equation*}
\gcd(A_1, B_1) = \alpha r_2 \qquad \text{and} \qquad A_1 = \gcd(A_1, B_1) r_1.
\end{equation*}

Next, we will show that $\gcd(A_1, B_1)$ divides $A_2$. By invariant condition \eqref{equation: I3}, $\gcd(A_1, B_1, C_1) = 1$. So, there exists $\mu, \delta, \epsilon \in \textgoth{o}$ such that $\mu A_1 + \delta B_1 + \epsilon C_1 = 1$. Multiplying both sides by $A_2$, we obtain
\begin{IEEEeqnarray*}{rCl}
A_2 & = & \mu A_1 A_2 + \delta B_1 A_2 + \epsilon A_2 C_1
\\
& = & \mu A_1 A_2 + \delta B_1 A_2 + \epsilon (B_1 B_2 - A_1 C_2)
\end{IEEEeqnarray*} 
by invariant condition \eqref{equation: I4}. So, $A_2 \in A_1 \textgoth{o} + B_1 \textgoth{o} = \gcd(A_1, B_1) \textgoth{o}$ and $\gcd(A_1, B_1)$ divides $A_2$. Hence,  we can select $\beta, \gamma \in \textgoth{o}^{\times}$ and $r_3 \in \textgoth{o} - \{ 0 \}/\textgoth{o}^{\times}$ such that 

\begin{equation*}
A_2 = \gcd(A_1, B_1) \beta r_3 \qquad \text{and} \qquad \alpha \beta \gamma = 1. 
\end{equation*}

Now we begin our construction of the matrix $X$ by first constructing the matrices $y_1, y_2, y_3$ in equation \eqref{equation: D And Y} so that $y_1, y_2, y_3 \in Y(\textgoth{o})$ (see equation \eqref{equation: YR}). Let $s_1, s_2 \in \textgoth{o}$ satisfy

\begin{equation*}
\gcd(A_1, B_1) s_1 = B_1 \qquad \text{and} \qquad s_2 = \alpha^{-1} C_1. 
\end{equation*}
Beginning with $y_1$, we observe that 
\begin{equation*}
\gcd(A_1, B_1) = \gcd(\gcd(A_1, B_1) r_1, \gcd(A_1, B_1) s_1) = \gcd(A_1, B_1) \gcd(r_1, s_1). 
\end{equation*}
Since $\gcd(A_1, B_1) \neq 0$ then $\gcd(r_1, s_1) = 1$. So, there exists $p_{1, 0}, q_{1, 0} \in \textgoth{o}$ such that $p_{1, 0} s_1 - q_{1, 0} r_1 = 1$. If $n \in \textgoth{o}$ then 

\begin{equation*}
p_{1, n} = p_{1, 0} - n r_1 \qquad \text{and} \qquad q_{1, n} = q_{1, 0} - n s_1
\end{equation*}
satisfy $p_{1, n} s_1 - q_{1, n} r_1 = 1$. Thus, there exists $\ell \in \textgoth{o}$ such that $p_{1, \ell} \in \textgoth{o}/r_1 \textgoth{o}$. Letting $p_1 = p_{1, \ell}$ and $q_1 = q_{1, \ell}$, we obtain $p_1 s_1 - q_1 r_1 = 1$ with $p_1 \in \textgoth{o}/r_1 \textgoth{o}$. Consequently, $y_1 \in Y(\textgoth{o})$. \newline

Next, to see that $\gcd(r_2, s_2) = 1$, observe that by invariant condition \eqref{equation: I3}, $\gcd(A_1, B_1, C_1) = 1$ and 

\begin{equation*}
\gcd(r_2, s_2) = \gcd(\gcd(A_1, B_1), C_1)  = \gcd(A_1, B_1, C_1) = 1.
\end{equation*}

Similarly, there exists $p_2 \in \textgoth{o}/r_2 \textgoth{o}$ and $q_2 \in \textgoth{o}$ such that $p_2 s_2 - q_2 r_2 = 1$. So, $y_2 \in Y(\textgoth{o})$. \newline

Finally, let $s_3 = \gamma (p_1 C_2 - q_1 B_2)$ as in equation \eqref{equation: Case 1}. To see that $\gcd(r_3, s_3) = 1$, we will find $u, v \in \textgoth{o}$ such that $ur_3 + v s_3 = 1$. Since $\gcd(A_2, B_2, C_2) = 1$ by invariant condition \eqref{equation: I3}, there exists $\delta_1, \delta_2, \delta_3 \in \textgoth{o}$ such that $\delta_1 A_2 + \delta_2 B_2 + \delta_3 C_2 = 1$. \newline

We will now derive expressions for $B_2$ and $C_2$. First, observe that

\begin{equation} \label{equation: 4.4}
p_1 B_1 - q_1 A_1 = \gcd(A_1, B_1)(p_1 s_1 - q_1 r_1) = \gcd(A_1, B_1) = \alpha r_2.
\end{equation}
and
\begin{equation} \label{equation: 4.5}
p_1 C_2 - q_1 B_2 = \gamma^{-1} s_3 = \alpha \beta s_3.
\end{equation}
If we multiply equation \eqref{equation: 4.4} by $C_2$ and then subtract equation \eqref{equation: 4.5} multiplied by $B_1$ from it, we find that since $A_1 C_2 - B_1 B_2 + A_2 C_1 = 0$, 

\begin{equation*}
q_1 A_2 C_1 = q_1(B_1 B_2 - A_1 C_2) = \alpha r_2(C_2 - \alpha \beta s_1s_3).
\end{equation*}
By the equations $A_2 = \alpha \beta r_2 r_3$ and $C_1 = \alpha s_2$, we find at

\begin{equation} \label{equation: C2}
C_2 = \alpha \beta (q_1 r_3 s_2 + s_1 s_3).
\end{equation}

By a similar argument,

\begin{equation} \label{equation: B2}
B_2 = \alpha \beta(p_1 r_3 s_2 + r_1 s_3). 
\end{equation}
Therefore, if we substitute equations \eqref{equation: C2}, \eqref{equation: B2} and $A_2 = \alpha \beta r_2 r_3$ into the equation $\delta_1 A_2 + \delta_2 B_2 + \delta_3 C_2 = 1$, we obtain
\begin{equation*}
1 = r_3(\delta_1 \alpha \beta r_2 + \alpha \beta \delta_2 p_1 s_2 + \delta_3 \alpha \beta q_1 s_2) + s_3(\alpha \beta \delta_2 r_1 + \alpha \beta \delta_3 s_1). 
\end{equation*}
Hence, $r_3 \textgoth{o} + s_3 \textgoth{o} = \textgoth{o}$ and $\gcd(r_3, s_3) = 1$. Analogously to before, we obtain $p_3 \in \textgoth{o}/r_3 \textgoth{o}$ and $q_3 \in \textgoth{o}$ such that $p_3 s_3 - q_3 r_3 = 1$. We conclude that the matrix $y_3 \in Y(\textgoth{o})$. \newline

Next, assume that $d$ is defined as in equation \eqref{equation: D And Y}. Since $\gamma \beta \alpha = 1$, $d \in D(3)$. Now assume that $W$ and $V$ are defined as in \eqref{equation: W And V}. To see that $V \in U(3)$, we will first show that 

\begin{equation} \label{equation: Some Matrix}
W = 
\begin{pmatrix}
\gamma (p_1 p_3 + p_2 q_3 r_1) & \gamma (p_2 q_3 s_1 + p_3 q_1) & \gamma q_2 q_3 \\
\beta(p_1 r_3 + p_2 r_1 s_3) & \beta(p_2 s_1 s_3 + q_1 r_3) & \beta q_2 s_3 \\
\alpha r_1 r_2 & \alpha r_2 s_1 & \alpha s_2 \\
\end{pmatrix}
\end{equation}
is congruent to an upper triangular, unipotent matrix mod $3 \textgoth{o}$, where the congruence is computed entrywise. Let $w_{ij}$ denote the $i,j$ entry of $W$. For the bottom row of $W$, we note that $w_{31} = \alpha r_1 r_2 = A_1 \equiv 0$ mod $3 \textgoth{o}$, $w_{32} = \alpha r_2 s_1 = B_1 \equiv 0$ mod $3 \textgoth{o}$ and $w_{33} = \alpha s_2 = C_1 \equiv 1$ mod $3 \textgoth{o}$ by invariant condition \eqref{equation: I2}. \newline

Before we compute $w_{11}, w_{21}$ and $w_{22}$ modulo $3 \textgoth{o}$ we must first establish a few congruence relations. By invariant conditions \eqref{equation: I1} and \eqref{equation: I2}, $\gcd(A_1, B_1) \equiv 0$ mod $3 \textgoth{o}$, 

\begin{equation} \label{equation: Mod 1}
r_2 = \alpha^{-1} \gcd(A_1, B_1) \equiv 0 \text{ mod } 3 \textgoth{o},
\end{equation}
\begin{equation} \label{equation: Mod 2}
s_2 = \alpha^{-1} C_1 \equiv \alpha^{-1} \text{ mod } 3 \textgoth{o}
\end{equation}
and
\begin{equation} \label{equation: Mod 3}
s_3 \equiv \gamma (p_1(1) - q_1(0)) \equiv \gamma p_1 \text{ mod } 3 \textgoth{o}.
\end{equation}

In turn, equations \eqref{equation: Mod 1} and \eqref{equation: Mod 2} yield

\begin{equation} \label{equation: Mod 4}
p_2 = (p_2 \alpha^{-1}) \alpha \equiv p_2 s_2 \alpha \equiv (1 + q_2 r_2) \alpha \equiv \alpha \text{ mod } 3 \textgoth{o}. 
\end{equation}
By invariant condition \eqref{equation: I4}, $A_1 C_2 - B_1 B_2 + A_2 C_1 = 0$ and $r_1 C_2 - s_1 B_2 + \beta r_3 C_1 = 0$. Reducing modulo $3 \textgoth{o}$, we find that 

\begin{equation*}
0 = r_1 C_2 - s_1 B_2 + \beta r_3 C_1 \equiv r_1(1) - s_1(0) + \beta r_3(1) \equiv r_1 + \beta r_3 \text{ mod } 3 \textgoth{o},
\end{equation*}
and
\begin{equation} \label{equation: Mod 5}
r_1 \equiv - \beta r_3 \text{ mod } 3 \textgoth{o}. 
\end{equation}

Reducing the matrix elements $w_{11}$, $w_{21}$ and $w_{22}$ modulo $3 \textgoth{o}$, we obtain
\begin{IEEEeqnarray*}{rCl}
w_{21} & = & \beta r_3 p_1 + \beta p_2 r_1 s_3
\\
& \equiv & (- r_1) p_1 +  \beta p_2 r_1 s_3 \text{ mod } 3\textgoth{o} \quad \eqref{equation: Mod 5}
\\
& \equiv & - p_1 r_1 + \beta \alpha r_1 s_3 \text{ mod } 3\textgoth{o} \quad \eqref{equation: Mod 4}
\\
& \equiv & - p_1 r_1 + \beta \alpha r_1 (\gamma p_1) \text{ mod } 3\textgoth{o} \quad \eqref{equation: Mod 3}
\\
& \equiv & 0 \text{ mod } 3\textgoth{o},
\end{IEEEeqnarray*}
\begin{IEEEeqnarray*}{rCl}
w_{22} & = & \beta p_2 s_1 s_3 + \beta q_1 r_3
\\
& \equiv & \beta p_2 s_1 s_3 + q_1 (-r_1) \text{ mod } 3\textgoth{o} \quad \eqref{equation: Mod 5}
\\
& \equiv & \beta \alpha s_1 s_3 - q_1 r_1 \text{ mod } 3\textgoth{o} \quad \eqref{equation: Mod 4}
\\
& \equiv & \beta \alpha s_1 (\gamma p_1) - q_1 r_1 \text{ mod } 3\textgoth{o} \quad \eqref{equation: Mod 3}
\\
& \equiv & p_1 s_1 - q_1 r_1 \equiv 1 \text{ mod } 3\textgoth{o},
\end{IEEEeqnarray*}
\begin{IEEEeqnarray*}{rCl}
w_{11} & = & \gamma p_1 p_3 + \gamma  p_2 q_3 r_1
\\
& \equiv & s_3 p_3 + \gamma p_2 q_3 r_1 \text{ mod } 3\textgoth{o} \quad \eqref{equation: Mod 3}
\\
& \equiv & s_3 p_3 + \gamma  p_2 q_3 (- \beta r_3) \text{ mod } 3\textgoth{o} \quad \eqref{equation: Mod 5}
\\
& \equiv & p_3 s_3 - q_3 r_3 \text{ mod } 3\textgoth{o} \quad \eqref{equation: Mod 4}
\\
& \equiv & 1 \text{ mod } 3\textgoth{o}.
\end{IEEEeqnarray*}
Hence, 

\begin{equation*}  
W \equiv
\begin{pmatrix}
1 & \gamma (p_2 q_3 s_1 + p_3 q_1) & \gamma q_2 q_3 \\
 & 1 & \beta q_2 s_3 \\
 &  & 1 \\
\end{pmatrix} \mod 3 \textgoth{o}
\end{equation*}
and so the matrix $W^{-1}$ is congruent to an upper triangular unipotent matrix mod $3 \textgoth{o}$. Since the matrix $V$ is obtained by reducing each entry of $W^{-1}$ mod $3 \textgoth{o}$ then $V \in U(3)$. \newline

Now suppose that $X = VW$. Since $y_1, y_2, y_3 \in Y(\textgoth{o})$, $d \in D(3)$ and $V \in U(3)$, then the decomposition $X = I_3 \cdot V \cdot d \cdot \varphi_1(y_3) \cdot \varphi_2(y_2) \cdot \varphi_1(y_1)$ is the same one in Theorem \ref{theorem: Bruhat Left 3 Specific}. In particular, since $y_2, y_3 \neq I_2$, it is the decomposition from equation \eqref{equation: Left Delta 1 1}. The matrix $X \in \Gamma(3)$ because $X = VW \equiv W^{-1} W \equiv I_3$ mod $3 \textgoth{o}$ and $\det(X) = 1$ by Theorem \ref{theorem: Bruhat Left 3 Specific}. \newline

To check that $Inv(X) = (A_1, B_1, C_1, A_2, B_2, C_2)$ it suffices to compute $Inv(W)$ because since $V \in U(3)$ then $Inv(W) = Inv(VW) = Inv(X)$. Recalling the definition of $W$ from equation \eqref{equation: Some Matrix}, the $\Lambda^1$ invariants of this matrix are $\alpha r_1 r_2 = A_1$, $\alpha r_2 s_1 = B_1$ and $\alpha s_2 = C_1$. The $\Lambda^2$ invariants of $W$, which are the entries of the bottom row of $\Lambda^2(W)$, satisfy 
\begin{align*}
\alpha \beta r_2 s_1(p_1 r_3 + p_2 r_1 s_3) & - \alpha \beta r_1 r_2 (p_2 s_1 s_3 + q_1 r_3)  = \alpha \beta p_1 r_2 r_3 s_1 - \alpha \beta r_1 r_ 2 r_3 q_1 \\
& = \alpha \beta r_2 r_3(p_1 s_1 - q_1 r_1) \\
& = \alpha \beta r_2 r_3 = A_2,
\end{align*}
\begin{align*}
\gcd(A_1, B_1) (\alpha \beta s_2(p_1 r_3 + p_2 r_1 s_3) & - \alpha \beta q_2 s_3 r_1 r_2) = \gcd(A_1, B_1)(\alpha \beta p_1 r_3 s_2 + \alpha \beta r_1 s_3) \\
& = p_1 A_2 C_1 + p_1 A_1 C_2 - q_1 A_1 B_2 \\
& = p_1 B_1 B_2 - q_1 A_1 B_2 \quad \eqref{equation: I4} \\
& = \gcd(A_1, B_1) (p_1 s_1 B_2 - q_1 r_1 B_2) = \gcd(A_1, B_1) B_2,
\end{align*}
and
\begin{align*}
\gcd(A_1, B_1)(\alpha \beta s_2(p_2 s_1 s_3 + q_1 r_3) & - \alpha \beta  q_2 r_2 s_1 s_3) = \gcd(A_1, B_1)(\alpha \beta  s_1 s_3 + \alpha \beta  q_1 r_3 s_2) \\
& = p_1 B_1 C_2 - q_1 B_1 B_2 + q_1 A_2 C_1 \\
& = p_1 B_1 C_2 - q_1 A_1 C_2 \quad \eqref{equation: I4} \\
& = \gcd(A_1, B_1)( p_1 s_1 C_2 - q_1 r_1 C_2) = \gcd(A_1, B_1) C_2.
\end{align*}
Since $\gcd(A_1, B_1) \neq 0$ then the $\Lambda^2$ invariants of $W$ are $A_2, B_2$ and $C_2$. Hence, 

\begin{equation*}
(A_1, B_1, C_1, A_2, B_2, C_2) = Inv(W) = Inv(X). 
\end{equation*}
This completes the proof of case 1. \newline

The proofs of cases 2, 3 and 4 in \eqref{equation: Cases} are similar to case 1. The main differences are that in case 2, $s_3 = 1$, $p_1 C_2 - q_1 B_2 = \alpha \beta$ and the resulting decomposition is the one from equation \eqref{equation: Left Delta 1 0}. In case 3, $A_1 = 0$ and $B_1 \neq 0$ which means that $\gcd(A_1, B_1) = B_1$,

\begin{equation*}
B_1 = \alpha r_2, \qquad A_2 = \beta B_1 r_3 \qquad \text{and} \qquad s_3 = \gamma C_2. 
\end{equation*}

In this case, the resulting decomposition is the one from equation \eqref{equation: Left Delta 1 1}. Case 4 is a combination of cases 2 and 3. In case 4, $\beta = \alpha^{-1}$ and the decomposition $X = I_3 V d \varphi_2(y_2)$ is the one from equation \eqref{equation: Left Delta 1 0}. These changes to the proof of case 1 provide an outline of what is required to complete the proofs of cases 2, 3 and 4. \newline

For case 5, assume that $A_1 = B_1 = A_2 = 0$. By invariant condition \eqref{equation: I2}, $C_1 \equiv 1$ mod $3 \textgoth{o}$ and by invariant condition \eqref{equation: I3}, $\gcd(0, 0, C_1) = 1$ and $\gcd(0, B_2, C_2) = \gcd(B_2, C_2) = 1$. Consequently, $C_1 = 1$ and there exists $a, b \in \textgoth{o}$ such that $a C_2 - b B_2 = 1$. \newline

If $m \in \textgoth{o}$ then define $a_m = a - m B_2$ and $b_m = b - m C_2$. Then, $a_m C_2 - b_m B_2 = 1$. By setting $m = b$ we obtain

\begin{equation*}
a_{b} = a - b B_2 \qquad \text{and} \qquad b_{b} = b - b C_2. 
\end{equation*}
Since $B_2 \equiv 0$ mod $3\textgoth{o}$ and $C_2 \equiv 1$ mod $3\textgoth{o}$ from invariant conditions \eqref{equation: I1} and \eqref{equation: I2}, $b_{b} \equiv 0$ mod $3\textgoth{o}$. Since $a_{b} C_2 - b_{b} B_2 = 1$ then

\begin{equation*}
1 = a_b C_2 - b_b B_2 \equiv a_b (1) - (0)(0) \equiv a_b \text{ mod } 3 \textgoth{o}.
\end{equation*}
By defining the matrix $X$ as in equation \eqref{equation: Corollary X}, we find, by direct computation, that $X \in \Gamma(3)$, the decomposition of $X$ is the one in equation \eqref{equation: Left Delta 2} and $Inv(X) = (0, 0, C_1, A_2, B_2, C_2)$ as required.
\end{proof}

\begin{example}
Here is an application of Theorem \ref{theorem: Bruhat as Invariants + Cases} to a concrete example. Suppose that we have the following elements of $\textgoth{o}$ which satisfy the invariant conditions:

\begin{equation*}
\begin{matrix}
A_1 = -3 + 6 \omega, & B_1 = -3, & C_1 = -2 - 3 \omega, \\
A_2 = -6 + 3 \omega, & B_2 = 3 - 6 \omega, & C_2 = 4 + 3 \omega.
\end{matrix}
\end{equation*}
By applying Theorem \ref{theorem: Bruhat as Invariants + Cases}, the matrix

\begin{equation*}
X =
\begin{pmatrix}
-11 - 3 \omega & -3 - 3 \omega & -3 \omega \\
-24 - 33 \omega & - 2 - 12 \omega & 12 + 3 \omega \\
-3 + 6 \omega & -3 & -2 - 3 \omega \\
\end{pmatrix} \in \Gamma(3)
\end{equation*}
satisfies $Inv(X) = (A_1, B_1, C_1, A_2, B_2, C_2)$. Thus, $X$ is a representative of the matrix orbit $\Gamma_{\infty}(3) \cdot A$, where $A \in \Gamma(3)$ and $Inv(A) = (A_1, B_1, C_1, A_2, B_2, C_2)$. Moreover, 

\begin{equation} \label{equation: X Decomp Example}
X =
\begin{pmatrix}
1 & 0 & 2 \omega \\
& 1 & 2 + \omega \\
& & 1 
\end{pmatrix}
\begin{pmatrix}
-1 - \omega & & \\
& 1 + \omega & \\
& & 1 + \omega
\end{pmatrix} \varphi_1(y_3) \varphi_2(y_2) \varphi_1(y_1),
\end{equation}
where

\begin{equation*}
y_1 = 
\begin{pmatrix}
1 + 2 \omega & \omega \\
2 + 3 \omega & \omega
\end{pmatrix}, \quad 
y_2 =
\begin{pmatrix}
1 + \omega & -1 - \omega \\
3 & -3 - \omega
\end{pmatrix} \quad \text{and} \quad
y_3 = 
\begin{pmatrix}
2 + 2 \omega & 1 + 6 \omega \\
3 + 2 \omega & 4 + 8 \omega
\end{pmatrix}.
\end{equation*}
One can verify that the decomposition of $X$ in equation \eqref{equation: X Decomp Example} is the same one stemming from equation \eqref{equation: Left Delta 1 1}.
\end{example}

\end{document}